\documentclass{amsart}
\usepackage[utf8]{inputenc}

\usepackage[dvipsnames]{xcolor}

\usepackage{xypic}
\usepackage{tikz}
\usetikzlibrary{trees}
\usetikzlibrary{arrows.meta, decorations.pathmorphing, decorations.shapes, decorations.pathreplacing, calc}

\usepackage{anyfontsize}

\usepackage{amsfonts}
\usepackage{amssymb}
\usepackage{amsmath}
\usepackage{amsthm}

\usepackage{thmtools,thm-restate}

\usepackage{ulem}

\usepackage{biblatex} 
\addbibresource{bibliography.bib} 

\theoremstyle{plain}
\newtheorem{Thm}{Theorem}
\newtheorem{Cor}{Corollary}

\newtheorem{Lem}[Thm]{Lemma}

\newtheorem{Conj}[Thm]{Conjecture}

\theoremstyle{definition}
\newtheorem{Def}[Thm]{Definition}

\newcommand{\NN}{\mathbb{N}}

\newcommand{\restr}{\upharpoonright}

\newcommand{\calP}{\mathcal{P}}
\newcommand{\calF}{\mathcal{F}}

\DeclareMathOperator{\stab}{Stab}
\DeclareMathOperator{\aut}{Aut}
\DeclareMathOperator{\sym}{Sym}

\DeclareMathOperator{\pred}{Pred}
\DeclareMathOperator{\cay}{Cay}

\begin{document}
\title{Frucht's theorem without choice}
\author{Brian Pinsky}
\begin{abstract}
    Frucht's theorem is the statement that ``every group is the automorphism group of a graph".  This was shown over ZFC by Sabidussi \cite{sabidussi} and deGroot\cite{degroot}, by induction using a well ordered generating set for the group. Sabidussi's proof is easily modified to use induction on the rank of a generating set, and thus holds over ZF.

    We show that Frucht's theorem is independent of ZFA set theory (ZF with atoms), by showing it fails in several common permutation models.  We also present some permutation models where Frucht's theorem holds, even when AC fails.

    As a corollary, we show that a stronger version of Frucht's theorem due to Babai in \cite{BABAI197826} can fail without choice.
\end{abstract}
\maketitle

\section{Introduction}

Frucht's theorem says every group is isomorphic to the automorphism group of a simple graph (with no loops, multiple edges or directed edges).  This was proved by Frucht \cite{frucht} in 1939 for finite groups, and generalized independently by deGroot \cite{degroot} and Sabidussi \cite{sabidussi} to infinite groups in 1959-1960.  All the proofs use the same basic idea; given a group $\Gamma$, take an (edge-)colored directed Cayley graph for $\Gamma$, with one color for each generator.  The automorphism group of this colored Cayley graph is $\Gamma$, but it is not a simple graph.
The constructions in \cite{degroot} and \cite{sabidussi} then add additional structure to encode the colors, as we will summarize in section \ref{zf}.

Frucht's theorem for permutation models is more interesting. In section \ref{counterexamples}, we show Frucht's theorem fails in several permutation models (e.g. the basic Mostowski model).  We show this by examining ``internal" automorphisms of these models; that is, automorphisms of the atoms which are themselves symmetric.  Frucht's theorem will fail in any model with ``enough" internal automorphisms.
Theorem \ref{simplified_no_frucht} gives a general criterion for a Frucht's theorem to fail for a particular group.
Since Frucht's theorem is true in ZF, its failure in these ZFA models can't be transfered to a symmetric model.
This is similar to the results in \cite[chapter 9]{jech}, but differs in that Frucht's theorem does not imply AC over ZF.

On the other hand, in Section \ref{non-counterexamples},  we show Frucht's theorem holds in some permutation models (e.g. the ordered Mostowski model), even when choice fails.  
The atoms of these models have some structure which becomes rigid in the permutation model, so they have no internal automorphisms.  In these cases, we can code the rigid structure of the atoms into rigid graphs, and adapt the proof of Frucht's theorem from section \ref{zf}.

In section \ref{babai}, we look at Babai's theorem \cite{BABAI197826}: if $\Gamma$ is infinite, then $\Gamma$ is the transitive automorphism group of some directed graph (and also an undirected graph with 3 vertex orbits).  By applying the Jech-Sochor embedding to a ZFA model where Frucht's theorem fails, we show Babai's theorem can fail in ZF.

\subsection{Definitions}
$V$ is the universe of all sets, and $V_\alpha$ is the set of all sets of rank $\alpha$.  In particular, in ZFA, $V_0$ is the set of atoms, and $V_0=\varnothing$ in ZF.

Inside $V$, a \textbf{graph} (or ``simple graph") $G$ is a set $\langle V(G),E(G)\rangle$ where $V(G)$ is any set and $E(G)\subseteq V^2$ is a symmetric, irreflexive relation.  A \textbf{directed graph} only requires $E(G)$ be irreflexive.  A \textbf{colored (directed) graph} with a set $S$ of colors is a (directed) graph with an $S$-valued edge-coloring, i.e. a function from $E(G)$ to $S$.
A \textbf{pointed graph} is a graph with a distinguished vertex.

A \textbf{homomorphism} of graphs is a function on vertices that preserves both edges and non-edges.  Homomorphisms must also preserve edge color for colored graphs, and the distinguished vertex for pointed graphs.
A \textbf{rigid graph} is one with no non-trivial automorphisms.
A graph is \textbf{connected} if any two verticies are connected by a path, i.e. a finite sequence of adjacent edges.  A \textbf{RCPG} is a rigid, connected, pointed graph.

Finally, if $S$ is a generating set for a group $\Gamma$, the corresponding \textbf{colored directed Cayley graph} $\widetilde{\cay}(\Gamma,S)$ is the colored drected graph with color set $S$, vertex set $\Gamma$, and whose $s$-colored edges are $\{\langle g,gs\rangle | g\in \Gamma \}$.  $\cay(\Gamma,S)$ is the same graph without the edge colors.

\section{Frucht's theorem in ZF}\label{zf}

\begin{Thm}[Frucht's theorem]
    Every group is the automorphism group of a graph
\end{Thm}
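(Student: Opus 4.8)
The plan is to follow the classical Sabidussi--deGroot strategy, isolating the one place where the axiom of choice enters and replacing it with a recursion on rank. First I would fix any generating set $S$ for $\Gamma$ (for instance $S = \Gamma$) and work with the colored directed Cayley graph $\widetilde{\cay}(\Gamma, S)$. Left multiplication by $h \in \Gamma$ sends the $s$-colored edge $\langle g, gs\rangle$ to $\langle hg, hgs\rangle$, so $\Gamma$ embeds into $\aut(\widetilde{\cay}(\Gamma,S))$; conversely, since $S$ generates, the graph is connected, and any color- and direction-preserving automorphism $\phi$ satisfies $\phi(vs) = \phi(v)s$ for every $v$ and $s\in S$, so $\phi$ is determined by $\phi(e)$ and must be the corresponding left translation. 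Hence $\aut(\widetilde{\cay}(\Gamma,S)) = \Gamma$ on the nose, and the theorem reduces to encoding the colors and edge directions of $\widetilde{\cay}(\Gamma,S)$ into an honest simple graph without creating or destroying automorphisms.

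The encoding replaces each $s$-colored directed edge by a gadget that (i) is rigid, (ii) records the direction from $g$ to $gs$, and (iii) is built from a marker $H_s$ depending only on the color $s$, with $H_s\cong H_t$ only when $s = t$. Over ZFC one simply well-orders $S$ and lets the marker grow with the ordinal index; the ZF content is to produce such markers choice-freely for an arbitrary set of colors. For this I would attach to each color $s$ the membership digraph on $\tc(\{s\})$ pointed at $s$, and then convert this pointed directed graph into an RCPG $H_s$ by a fixed, choice-free subdivision-and-tagging operation that turns each directed edge into an asymmetric undirected gadget. By foundation and extensionality, uniqueness of the Mostowski collapse gives $H_s \cong H_t \iff s = t$, and rigidity of $H_s$ follows by induction on the $\in$-rank of $s$: the pointed root is fixed, and each vertex is then pinned down by the already-fixed vertices below it.

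With the markers in hand I would form the final simple graph by taking vertex set $\Gamma$, subdividing each $s$-colored directed edge $\langle g, gs\rangle$, and hanging a fresh copy of the direction-encoding gadget built from $H_s$ off the subdivision vertex, choosing the ranks of all gadgets to exceed that of the skeleton so that no automorphism can confuse a gadget with the Cayley part or with a gadget of a different color. An automorphism of the result must then fix the set of original vertices, permute the gadgets respecting isomorphism type, and hence preserve colors (by $H_s\cong H_t\Rightarrow s=t$) and directions; its restriction to $\Gamma$ is an automorphism of $\widetilde{\cay}(\Gamma,S)$, so equals a left translation, and conversely every left translation extends. The main obstacle is the second paragraph, packaged in property (iii): arranging, entirely without choice, a family of pairwise non-isomorphic rigid pointed graphs indexed by an \emph{arbitrary} set and proving both their rigidity and the injectivity $H_s\cong H_t\Rightarrow s=t$ uniformly. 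Once the membership-graph markers and the rank bookkeeping are set up correctly, the remaining gluing and the automorphism computation are routine and manifestly choice-free.
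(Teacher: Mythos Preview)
Your proposal is correct and follows the same overall architecture as the paper: start from the colored directed Cayley graph $\widetilde{\cay}(\Gamma,\Gamma)$, then replace colors and directions by RCPG gadgets to obtain a simple graph with the same automorphism group. Where you diverge is in how you manufacture the family of pairwise non-isomorphic RCPGs indexed by an arbitrary set without choice. The paper isolates Sabidussi's inductive step as Lemma~\ref{inductively_building_labels} (extend an RCPG-labelling of $S$ to one of $\calP(S)$) and iterates it up the cumulative hierarchy starting from $V_0=\varnothing$. You instead go directly: assign to each $s$ the $\in$-digraph on $\tc(\{s\})$ pointed at $s$, then uniformly convert to an undirected RCPG. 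Your rigidity and injectivity claims are justified exactly as you say, by Mostowski collapse (or equivalently $\in$-induction using extensionality and foundation), and the conversion of directed to undirected graphs is the same machinery the paper uses for Lemma~\ref{enough_labels_implies_fructh}. Your route is a bit more direct and makes the dependence on the well-foundedness of $\in$ completely transparent; the paper's iterated-Sabidussi version has the advantage that its inductive step is reusable in the ZFA setting (Theorem~\ref{labels_implies_frucht_ZFA}), where one only needs to supply RCPGs for the atoms to get RCPGs for everything. Note in particular that your construction visibly breaks over ZFA, since distinct atoms $a,b$ give $\tc(\{a\})=\{a\}$ and $\tc(\{b\})=\{b\}$, whose membership digraphs are isomorphic single points---which is exactly the obstruction the rest of the paper exploits.
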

It is easy to show every group $\Gamma$ is the automorphism group of the corresponding colored directed Cayley graph  $\widetilde{\cay}(\Gamma,\Gamma)$.  So, to show Frucht's theorem, we will give two lemmas to replace a colored directed graph with a simple graph.

First, suppose we are given a colored directed graph and several rigid, connected, pointed graphs (RCPGs) to use as labels.  By attaching RCPGs to each edge indicating its color and direction, we can produce a simple graph with the same automorphism group, as pictured below (using sticks of length 2 and 3 as labels):
\begin{center}
    \begin{tabular}{r|c}
        \textbf{Encoding Directions:}    &  
        \begin{tikzpicture}[
                baseline = {([yshift = -.7ex]current bounding box.center)},
                 >={Stealth[width=1.5mm]},
                old/.style ={circle, draw, fill=gray, inner sep=0pt, minimum size=1.7mm},
                new/.style ={circle, draw, fill, inner sep=0pt, minimum size=1.2mm},
                label/.style ={ }
        ]
        \usetikzlibrary{calc, decorations.pathmorphing}
        \begin{scope}[xshift=-3cm]
            \node[old] (a) at (0,0) {};
            \node[old] (b) at (.8,0) {}
                edge[<-] (a);
            \node[old] (c) at (2.2,0) {}
                edge[->, bend left = 30] (b)
                edge[<-, bend right = 30] (b);
            \node[old] (d) at (3,0) {}
                edge[<-] (c);
        \end{scope}

        \draw[-{>[width=2mm]}, decorate, decoration=snake, color=red] (.3, 0) -- (1.7, 0);

        \newcommand{\drawstick}[5]{
            \foreach \x [var = \prev, evaluate =\prev using int(\x-1)]in {1,...,#2}{
                \node[new] (c#1_\x) at ($(#1)+\x*(#3:#4)+(#3:#5)$) {}
                    \ifnum \x > 1
                        edge (c#1_\prev) to (c#1_\x)
                    \fi;
            }
            \draw (c#1_1) to (#1);
        }
        \newcommand{\drawDirEdge}[4]{
            \path (#1)--(#2)
                node[new, pos=1/3, yshift = #3] (int1) {}
                node[new, pos=2/3, yshift = #3] (int2) {};
            \drawstick{int1}{2}{#4}{.3}{0};
            \drawstick{int2}{3}{#4}{.3}{0};
            \draw (#1) -- (int1) -- (int2) -- (#2);
        }
        
        \begin{scope}[xshift = 2cm]
            \node[old] (a) at (0,0) {};
            \node[old] (b) at (1,0) {};
            \node[old] (c) at (2,0) {};
            \node[old] (d) at (3,0) {};
            \drawDirEdge{a}{b}{0}{90}
            \drawDirEdge{b}{c}{.3cm}{90}
            \drawDirEdge{c}{b}{-.3cm}{270}
            \drawDirEdge{c}{d}{0}{90}
        \end{scope}

        \end{tikzpicture}\\
        
        \textbf{Encoding Colors:} & 
        \begin{tikzpicture}[
                baseline = {([yshift = .7ex]current bounding box.center)},
                 >={Stealth[width=1.5mm]},
                old/.style ={circle, draw, fill=gray, inner sep=0pt, minimum size=1.7mm},
                new/.style ={circle, draw, fill, inner sep=0pt, minimum size=1.2mm},
                label/.style ={ }
        ]
        \usetikzlibrary{calc, decorations.pathmorphing}
        \begin{scope}[xshift=-2cm]
            \node[old] (a) at (0,0) {};
            \node[old] (b) at (1,0) {}
                edge[blue] (a);
            \node[old] (c) at (0,1) {}
                edge[green] (a);
            \node[old] (d) at (1,1) {}
                edge[green] (b)
                edge[blue] (c);
        \end{scope}

        \draw[-{>[width=2mm]}, decorate, decoration=snake, color=red] (-.7, .5) -- (.7, .5);

        \newcommand{\drawstick}[5]{
            \foreach \x [var = \prev, evaluate =\prev using int(\x-1)]in {1,...,#2}{
                \node[new] (c#1_\x) at ($(#1)+\x*(#3:#4)+(#3:#5)$) {}
                    \ifnum \x > 1
                        edge (c#1_\prev) to (c#1_\x)
                    \fi;
            }
            \draw (c#1_1) to (#1);
        }
        \newcommand{\drawDirEdge}[4]{
            \path (#1)--(#2)
                node[new, pos=.5] (int#1#2) {};
            \drawstick{int#1#2}{#3}{#4}{.3}{0};
            \draw (#1) -- (int#1#2) -- (#2);
        }
        
        \begin{scope}[xshift = 2cm]
            \node[old] (a) at (0,0) {};
            \node[old] (b) at (1,0) {};
            \node[old] (c) at (0,1) {};
            \node[old] (d) at (1,1) {};
            \drawDirEdge{a}{b}{2}{270}
            \drawDirEdge{b}{d}{3}{0}
            \drawDirEdge{a}{c}{3}{180}
            \drawDirEdge{c}{d}{2}{90}
        \end{scope}

        \end{tikzpicture}
    \end{tabular}
\end{center}
Details of this construction can be found in \cite{details}.  This construction requires one rigid connected pointed graph for each color, giving the following:
\begin{Lem}\label{enough_labels_implies_fructh}
    Given an $S$ colored directed graph $G$ and an equinumerous with $S$ set of pairwise non-isomorphic RCPGs, there is a simple graph $G'$ with the same automorphism group as $G$
\end{Lem}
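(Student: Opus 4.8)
The plan is to make precise the "attach a labeled gadget to each edge" picture and verify it preserves the automorphism group exactly. Given the colored directed graph $G$ with color set $S$ and a family $\{R_s\}_{s\in S}$ of pairwise non-isomorphic RCPGs indexed bijectively by $S$, I would build $G'$ by starting from the vertex set $V(G)$ and, for each directed edge $e=\langle u,v\rangle$ of color $s$, subdividing (conceptually) and gluing in a fresh copy of a gadget that records both the color $s$ and the direction from $u$ to $v$. The direction is encoded by an asymmetric path-like device so that the two endpoints of the gadget are distinguishable (the "encoding directions" picture), and the color is encoded by attaching the distinguished point of a private copy of $R_s$ to the gadget (the "encoding colors" picture). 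Each copy of each $R_s$ is taken to be vertex-disjoint from everything else, sharing only its basepoint with the edge gadget.

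The first key step is to fix the gadget precisely and check that in $G'$ one can canonically recover $G$: I would argue that any automorphism of $G'$ must send original vertices of $V(G)$ to original vertices, and must carry each edge gadget to an edge gadget preserving color and direction. This is where the hypotheses do their work. The RCPGs being rigid and connected forces each attached copy of $R_s$ to be mapped isomorphically, and only, onto another copy of an isomorphic RCPG, with basepoints going to basepoints; the pairwise non-isomorphism across $S$ guarantees that a copy of $R_s$ can only map to a copy of the same $R_s$, so color is preserved. The direction-encoding device, being itself rigid and asymmetric between its two attachment ends, pins down the orientation. I would isolate these observations as the structural core, since they are what let an automorphism of $G'$ descend to a well-defined permutation of $V(G)$.

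With that structural description in hand, I would define the two maps realizing the isomorphism $\aut(G)\cong\aut(G')$. In the forward direction, an automorphism $\varphi$ of $G$ extends to $G'$ by acting as $\varphi$ on $V(G)$ and transporting each edge gadget on $e$ to the corresponding gadget on $\varphi(e)$ via the canonical identification of the two gadgets (they are copies of the same labeled device, since $\varphi$ preserves color and direction). In the reverse direction, an automorphism $\psi$ of $G'$ restricts to $V(G)$ by the previous paragraph and yields an automorphism of $G$ because adjacency-with-color-and-direction in $G$ is exactly detectable from the presence of the appropriate gadget in $G'$. I would then check these two assignments are mutually inverse group homomorphisms, which is routine once the canonical identifications are set up carefully.

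The main obstacle I expect is entirely in the second step: ruling out "unexpected" automorphisms of $G'$ that do not respect the gadget decomposition. One must ensure the gadgets cannot interact with each other or with $V(G)$ in spurious ways --- for instance, that a vertex internal to one gadget cannot be mapped to an original vertex, and that two gadgets attached at a common vertex cannot be swapped across incompatible colors or directions. This is handled by choosing the gadgets so that original vertices, gadget-junction vertices, and RCPG vertices are distinguishable by purely local, automorphism-invariant graph features (degree, local structure, membership in a rigid connected component of a certain type), and by exploiting rigidity of the $R_s$ to forbid internal symmetries. Since the excerpt defers the verification to \cite{details}, I would state the structural lemma carefully and sketch these distinguishability arguments rather than grind through every adjacency case, noting that the argument is uniform in $S$ and in particular uses no choice beyond the given indexed family of RCPGs.
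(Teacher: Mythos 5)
Your proposal is correct and follows essentially the same construction the paper sketches: subdivide each directed edge, encode direction with an asymmetric gadget and color by attaching a private copy of the corresponding RCPG at its basepoint, then use rigidity, connectedness, pointedness, and pairwise non-isomorphism to show every automorphism of $G'$ respects the gadget decomposition and hence restricts to an automorphism of $G$. The paper likewise defers the detailed adjacency verification to \cite{details}, so there is no substantive difference in approach.
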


Sabidussi's construction in \cite{4} works by induction on rank to construct a RCPG for each set.  He assumes AC, so he only cares about the graphs for ordinals. However, if we isolate his inductive step, he proves the following:
\begin{Lem}\label{inductively_building_labels}
    Suppose there is a function $R$ from a set $S$ into pairwise non-isomorphic RCPGs.  Then $R$ can be extended to a function from $\calP(S)$ into pairwise non-isomorphic RCPGs.
\end{Lem}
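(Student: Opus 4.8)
The plan is to give a single, uniform, choice-free construction $R'$ on all of $\calP(S)$ --- a ``bouquet'' of labels --- rather than treating $S$ and its complement in $\calP(S)$ by different recipes. For $A\in\calP(S)$ I would build $R'(A)$ canonically as follows: take the disjoint union of tagged copies of the graphs $R(s)$ for $s\in A$ (e.g.\ put the vertices of the $s$-th copy at $\{s\}\times V(R(s))$, so the copies are automatically disjoint and definable from $A$ with no choices), adjoin one fresh root vertex $r_A$ (tagged by $A$), join $r_A$ by an edge to the image $p_s$ of the distinguished vertex of each $R(s)$, and declare $r_A$ the distinguished vertex of $R'(A)$. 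Everything here is absolutely definable from $R$ and $A$, which is what matters in the absence of AC; in particular no well-ordering of $S$ or of the labels is used.

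Next I would check that each $R'(A)$ is an RCPG. Connectedness is immediate, since $r_A$ is joined to every copy and each $R(s)$ is connected. For rigidity, let $\phi\in\aut(R'(A))$. As $R'(A)$ is pointed, $\phi(r_A)=r_A$, so $\phi$ permutes the neighbours of $r_A$ and hence the connected components of $R'(A)-r_A$, which are exactly the copies of the $R(s)$. The key observation --- the one that makes the pointed hypothesis usable --- is that inside each copy the base vertex $p_s$ is recovered canonically as the unique neighbour of $r_A$; thus $\phi$ carries each copy to another copy as an isomorphism of \emph{pointed} graphs. Since the $R(s)$ are pairwise non-isomorphic as RCPGs, no copy can be sent to a different one, so $\phi$ fixes each copy setwise and fixes its base point, and rigidity of each $R(s)$ then forces $\phi$ to be the identity on it, whence $\phi=\mathrm{id}$.

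Finally I would prove that $A\mapsto R'(A)$ is injective up to isomorphism and then interpret the word ``extends''. Given an isomorphism $R'(A)\to R'(B)$, it must send $r_A$ to $r_B$ and therefore induce a bijection between the copies over $A$ and those over $B$; by the same base-vertex-is-the-root-neighbour observation this bijection respects pointed isomorphism type, so it matches $\{(R(s),p_s):s\in A\}$ with $\{(R(s),p_s):s\in B\}$, and injectivity of $R$ on pointed isomorphism types gives $A=B$. For ``extends'': the singleton value $R'(\{s\})$ is the one-label bouquet, i.e.\ the image $\iota(R(s))$ under the canonical re-encoding $\iota$ that hangs a graph off a fresh root; since $\iota$ is itself injective up to isomorphism, $R'$ restricted to singletons is a faithful copy of $R$, which is the sense in which $R'$ extends $R$. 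I would stress that literal equality $R'(\{s\})=R(s)$ cannot be demanded --- e.g.\ if some $R(t)$ happens already to be the bouquet of two other labels --- so the re-encoding is the correct reading rather than a defect.

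The step I expect to be the real content, as opposed to bookkeeping, is the recovery of each base point as ``the unique neighbour of the root'', since this is exactly what lets the \emph{pointed} (not merely unpointed) non-isomorphism of the $R(s)$ do its work in both the rigidity and the injectivity arguments: two rigid graphs can be unpointedly isomorphic yet non-isomorphic as pointed graphs, so without this observation the argument would stall. The remaining difficulties are only that the construction be genuinely canonical (so it survives passage to a permutation or symmetric model), which the vertex-tagging handles, and that ``extends'' be pinned down, which $\iota$ settles; by contrast the rigidity and injectivity computations are routine once the root/base-point structure is in place. This is also where the present lemma is cleaner than Sabidussi's original rank version, since we never need the new labels to be distinguishable from \emph{arbitrary} given labels --- only from one another.
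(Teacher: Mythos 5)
Your bouquet construction, taken on its own terms, does produce for each $A\in\calP(S)$ a rigid connected pointed graph, and your rigidity and injectivity arguments within $\calP(S)$ (root fixed, components of $R'(A)-r_A$ are the copies, base points recovered as the unique neighbours of the root, pointed non-isomorphism forbids permuting copies) are sound. But there is a genuine gap: the word ``extended'' in the statement is not bookkeeping to be reinterpreted away, it is the content of the lemma. The lemma is applied with $S$ transitive (so $S\subseteq\calP(S)$), the new function must literally agree with $R$ on $S$, and therefore the values assigned to the \emph{new} elements $x\in\calP(S)\setminus S$ must be non-isomorphic to every one of the \emph{given, arbitrary} RCPGs $R(s)$. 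Your construction gives no control over this whatsoever: as you yourself observe, some $R(t)$ may already be (isomorphic to) a bouquet of other labels, so a new bouquet can collide with an old label. You treat that observation as a reason to weaken the statement, when it is exactly the obstruction the construction is required to overcome. This is precisely what Sabidussi's gadget (the one sketched in the paper: two complete graphs on all of $S$, with every $R(s)$ hung off the bottom copy and the subset $x$ encoded on the top copy) is engineered to do --- the new graph contains all the old labels in a recognizable configuration, which is what makes it distinguishable from each of them while leaving $R$ untouched on $S$.

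The re-encoded version you prove also does not suffice for the lemma's intended use. The lemma is iterated transfinitely along the rank hierarchy, and at limit stages one takes the union of the functions built so far; this only yields a single function into \emph{pairwise} non-isomorphic RCPGs if the successor steps genuinely extend one another, so that labels created at different stages remain distinguishable. Likewise, in the ZFA application (Theorem \ref{labels_implies_frucht_ZFA}) the base function on the atoms is handed to you from outside (e.g.\ the neighbourhood graphs in the random graph model), and the labels you build for sets of atoms must avoid those given graphs --- again something the bouquet cannot guarantee. So you need to replace the bouquet by a construction that fixes $R$ on $S$ and forces non-isomorphism with all the $R(s)$, rather than argue that this requirement should be dropped.
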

We will sketch the construction, but refer to \cite{details} or \cite{sabidussi} for details.  Pick $x\in \calP(S)\setminus S$.  We start with 2 complete graphs on $S$, attached as shown.  On the bottom copy, using $R$, we attach a RCPG to each vertex, making it rigid.  On the top copy, we attach degree 1 verticies for each element of $x$, to encode $x$.  Finally, we attach one distinguished vertex, $t$, to each top vertex.
\begin{center}

\usetikzlibrary{arrows.meta, decorations.pathmorphing, shapes, fit, positioning, calc, through}
\begin{tikzpicture}[
		>={Stealth[width=1.5mm]},
		vertex/.style ={circle, draw, inner sep=1pt, minimum size=2.5mm},
		label/.style ={font=\large , opacity=1 },
		region/.style = {rounded corners, densely dotted, opacity=1},
		complete/.style = {opacity=.35}
	]
	\newcommand{\smallspace}{.07}
	
	\node[vertex] (t) at (0,2){$t$};
	
	\foreach \block/\shift in {a/0 , b/-1cm, g/-2cm }{
		\begin{scope}[name prefix=\block, yshift=\shift]
			\foreach \i/\j in {1/-3,2/-2, 3/-1, 4/1, 5/2}{
				\node[vertex] (\i ) at (\j,0) {};
			} 
			\node[anchor=west] (dots1) at ($(3.east)+(\smallspace,0)$) {\dots};
			\node[anchor=west] (dots2) at ($(5.east)+(\smallspace,0)$) {\dots};	
		\end{scope}
	}
	
	\foreach \block/\name in {a/{$S$ (top)}, b/{$S$ (bottom)}}{
		\foreach \i/\j/\angle  in {
					1/2/10, 
					1/3/15,
					1/4/17,
					1/5/18,
					2/3/10,
					2/4/14,
					2/5/17,
					3/4/12,
					3/5/14,
					4/5/10 }
		{
			\draw[complete] (\block\i) to[out=360-\angle  , in=180+\angle ]  (\block\j);
		}
		
		\draw[region ] ($(\block 1.north west)+(-\smallspace ,\smallspace)$) 
					-- 
					($(\block dots2.north east)!(\block 1.north west)!(\block dots2.south east)  + (\smallspace, \smallspace)$)
					--++ (0,-.7) 
						node[label, pos=.5, anchor= west] (name) {\name}
					-| ($(\block 1.south west)  + (-\smallspace, -\smallspace)$)
					-- cycle;
	}

	\foreach \j in {1,...,5}{
		\draw  (t) to (a\j);
		\draw  (a\j) to (b\j);
		\draw  (b\j) to (g\j); 
		
		\draw (g\j ) --++ (-.47, -.72) --++(.94,0)
				node[label, pos=.5, anchor=south, font=\large, yshift=-.9mm] {$R\displaystyle{_{s_{\text{{\fontsize{4}{1.1}\selectfont \j}}}}}$ }
			--(g\j);
		
		\node[font=\fontsize{6}{7}\selectfont ] at (b\j) {$s_{\text{{\fontsize{3}{4}\selectfont \j}}}$};
	}
	
	\foreach \j in {1,...,3}{
		\node[vertex, minimum size=1.5mm] (c\j) at ($(a\j)+(-.26,.34)$) {};
		\draw (c\j) to (a\j);
	}
	\draw[region] ($(c1.north west)+(-\smallspace ,\smallspace)$) 
			-- ($(c3.north east) + (\smallspace, \smallspace)$)
			-- ($(c3.south east)+(\smallspace, -\smallspace)$) 
			-- ($(c1.south west)+(-\smallspace, -\smallspace)$) 
			-- cycle
				node[label, pos=.5, anchor= east] (cname) {$x$};

\end{tikzpicture}
\end{center}
Under ZF, $V_0=\varnothing$, so there is a function from $V_0$ into non-isomorphic RCPGs.  By applying lemma \ref{inductively_building_labels} inductively, we can build a unique RCPG for each set in $V$, and thus have Frucht's theorem by lemma \ref{enough_labels_implies_fructh}.

\section{Frucht's theorem in Permutation Models}\label{zfa}
\subsection{The Basic Theory of Permutation Models}
The reader completely unfamiliar with permutation models is referred to \cite[chapter 4]{jech} for a better introduction.  This mostly serves to establish my notation.

ZFA set theory enriches ZF set theory with a set\footnote{Sometimes one allows a proper class of atoms; we do not here} $A$ of ``atoms"; sets with no elements, but distinct from the empty set (so, the signature is now $(\in, A)$, and extensionality is modified to exclude elements of $A$).  The notion of rank still makes sense for ZFA, but we have $V_0=\{\varnothing\}\cup A$ instead of just $\{\varnothing\}$.

Unlike ZF models, ZFA models can have non-trivial automorphisms.  Suppose $\pi$ is a bijection on $A$; then $\pi$ acts on $V_0$, and we can inductively define $\pi[S] = \{\pi[x] | x\in S\}$ to define an action of $\pi$ on $V$.  We denote this action with square brackets, to remove ambiguity with function application.

\textbf{Permutation models} are sub-models of a model of ZFA consisting of ``hereditarily symmetric" sets.  To construct one we need three things:
\begin{itemize}
    \item A model $M$ of ZFA
    \item A subgroup $\Sigma$ in $M$ of bijections of $A$ (which we will call ``Automorphisms" of $A$)
    \item A normal filter $\calF$ of subgroups of $\Sigma$.  This means:
    \begin{itemize}
        \item $\calF$ is upwards closed and closed under finite intersections
        \item If $\Psi \in \calF$ and $\sigma \in \Sigma$, then $\sigma \Psi \sigma^{-1}\in \calF$
        \item For each $a\in A$, $\stab(a)\in \calF$
    \end{itemize}
\end{itemize}
Since $\Sigma$ acts on $M$, we can define $\stab(S)$ for any $S\in M$.  We say $S\in M$ is \textbf{symmetric} if $\stab(S)\in \calF$, and \textbf{hereditarily symmetric} if each $x$ in the transitive closure of $S$ is also symmetric.  One can easily show the hereditarily symmetric elements of $M$ form a model of ZFA, with the same set of atoms (see Jech for more details).  This is called a permutation model, and we will denote it $M/\calF$ or $M'$ (when $\calF$ is clear).

We need to differentiate between \textbf{``internal" and ``external" automorphisms} of $A$.  $\sym_M(A)$ is usually much larger than $\sym_{M'}(A)$, so elements of $\Sigma$ can fail to be symmetric.  We call automorphisms of $A$ in $\Sigma \cap M'$ internal, and automorphisms not in $M'$ external.  If $\pi$ is an internal automorphism of $A$, then the mapping $x\mapsto \pi[x]$ is a definable class in $M'$, but this will fail if $\pi$ is not internal.

In most of our examples, $\calF$ will be generated by subgroups of the form $\stab(B)$ for subsets $B\subseteq A$.  In these cases, if $S$ is a symmetric set, there must be some $B\subseteq A$ so $\stab(B)\subseteq \stab(S)$.  $B$ is called a \textbf{support} of $S$ in this case.  In the case $\calF$ is generated by sets $\stab(a)$ for $a\in A$, supports can always be taken as finite subsets of $S$.

\subsection{Counterexamples to Frucht's Theorem in ZFA}\label{counterexamples}
\subsubsection{A general criterion for Frucht's Theorem to fail}

Suppose $G$ is a graph in a permutation model, and $\pi$ is an internal automorphism which fixes $G$.  Since $\pi$ acts on the entire model, $\pi$ acts on $G$ and $\pi$ acts on $\aut(G)$.  It turns out the action of $\pi$ on $\aut(G)$ must be given by conjugation by the action of $\pi$ on $G$.  This fact gives rise to the following theorem\footnote{The same proof actually gives a more technical, slightly stronger theorem (see \cite{details}), but I have yet to find an example where that applies but this does not.}:

\begin{Thm}\label{simplified_no_frucht}
    Let $M'=M/\calF$ be a permutation model given by a filter $\calF$ on $\Sigma$ in $M$. Let $\Gamma$ be a group in $M'$
    
    Suppose that for every $F\in \calF\cap \stab(\Gamma)$, there is $\pi\in F\cap M'$ such that the action of $\pi$ on $\Gamma$ is not an inner automorphism of $\Gamma$
    
    Then $\Gamma$ is not the automorphism group of any graph in $M'$
\end{Thm}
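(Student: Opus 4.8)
The plan is to argue by contradiction, exploiting the remark preceding the theorem: an internal automorphism fixing a graph $G$ acts on $\aut(G)$ by an \emph{inner} automorphism. The heart of the matter is the following observation, which I would establish first. Suppose $G=\langle V(G),E(G)\rangle$ is a graph in $M'$ and $\pi\in\stab(G)\cap M'$. Then $\pi[V(G)]=V(G)$ and $\pi[E(G)]=E(G)$, so $p:=\pi\restr V(G)$ is a bijection of $V(G)$ preserving $E(G)$ setwise; that is, $p\in\aut(G)$, and $p\in M'$ since it is definable from $\pi$ and $G$. Moreover, for any $f\in\aut(G)$, viewing $f$ as a set of pairs gives $\pi[f]=\{\langle \pi[x],\pi[y]\rangle : \langle x,y\rangle\in f\}$, which as a function is exactly $p\circ f\circ p^{-1}$. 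Thus the action $f\mapsto\pi[f]$ on $\aut(G)$ is conjugation by $p\in\aut(G)$, i.e.\ an inner automorphism of $\aut(G)$.

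With this in hand, suppose toward a contradiction that $\Gamma=\aut(G)$ for some graph $G\in M'$ (I will treat a mere isomorphism below). Since $G\in M'$ it is symmetric, so $\stab(G)\in\calF$; and since $\aut(G)$ is definable from $G$ we have $\stab(G)\subseteq\stab(\aut(G))=\stab(\Gamma)$, whence $F:=\stab(G)\in\calF\cap\stab(\Gamma)$. Applying the hypothesis to this $F$ yields some $\pi\in\stab(G)\cap M'$ whose action on $\Gamma$ is not inner. But $\pi\in\stab(G)\cap M'$ is precisely the situation of the opening observation, so the action of $\pi$ on $\Gamma=\aut(G)$ \emph{is} inner — a contradiction. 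Hence no such $G$ exists.

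To handle a genuine isomorphism $\phi\colon\Gamma\to\aut(G)$ in $M'$ rather than equality, I would instead take $F:=\stab(G)\cap\stab(\phi)$, which lies in $\calF$ and inside $\stab(\Gamma)$ (as $\Gamma$ is the domain of $\phi$). The element $\pi$ produced then stabilizes $\phi$, so $\phi$ intertwines the two induced actions: $\phi\circ\alpha_\pi=\beta_\pi\circ\phi$, where $\alpha_\pi,\beta_\pi$ denote the actions of $\pi$ on $\Gamma$ and on $\aut(G)$ respectively. Since $\beta_\pi$ is conjugation by $p\in\aut(G)$, transporting through $\phi$ shows $\alpha_\pi$ is conjugation by $\phi^{-1}(p)\in\Gamma$, again inner, contradicting the choice of $\pi$.

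The main obstacle — really the only nontrivial point — is the opening observation, and specifically checking that $\pi\restr V(G)$ lands inside $\aut(G)$ \emph{as computed in $M'$} and that the set-theoretic action $x\mapsto\pi[x]$ on functions coincides with honest conjugation by that element. Once that identification is pinned down, the remainder is bookkeeping: choosing $F$ so the hypothesis applies while still controlling $\aut(G)$ (and, in the isomorphism version, the comparison map $\phi$), and reading off the contradiction.
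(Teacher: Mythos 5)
Your proposal is correct and follows essentially the same route as the paper: identify the action of a symmetric $\pi$ stabilizing $G$ on $\aut(G)$ as conjugation by $p=\pi\restr V(G)$, transport through the isomorphism $\varphi$, and contradict the hypothesis that $\pi$ acts non-innerly on $\Gamma$. The only cosmetic difference is that the paper takes the stabilizer of the whole triple $\langle G,\varphi,\Gamma\rangle$ at once rather than arguing separately that $\stab(G)\cap\stab(\varphi)\subseteq\stab(\Gamma)$.
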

\begin{proof}
    Suppose for contradiction $G\in M'$ is a graph with automorphism group isomorphic to $\Gamma$ in $M'$.  Let $\varphi\in M'$ be an isomorphism from $\Gamma$ to $\aut(G)$ in $M'$.  Since the triple $\langle G, \varphi, \Gamma\rangle$ is in $M'$, it must be symmetric; let $F\subseteq \Sigma$ be its stabilizer in $M$, and $F'=F\cap M'$.

    Since $F$ stabilizes $\Gamma$, by hypothesis there is $\pi\in F'$ which does not act on $\Gamma$ by an inner automorphism; i.e. $\forall x \in \Gamma \exists \gamma \in \Gamma. \pi[\gamma]\neq x \gamma x^{-1}$

    Since $F$ stabilizes $G$, the map $x\mapsto \pi[x]$ defines a graph automorphism $p\colon G\to G$.  Let $\rho=\varphi^{-1}(p)$ be the corresponding element of $\Gamma$.  Since $\pi$ is not inner, we can pick $\gamma\in \Gamma$ so $\pi[\gamma]\neq \rho \gamma \rho^-1$.  Let $g=\varphi(\gamma)$ be the corresponding automorphism of $G$
    
    Since $F$ stabilizes $\varphi$, 
    $$
    \pi[g] = \pi[\varphi(\gamma)] = \pi[\varphi](\pi[\gamma]) = \varphi(\pi[\gamma])
    $$
    However, for fixed $g\in \aut(G)$ and vertex $v\in G$,
    $$p(g(v)) = \pi[g(v)] = \pi[g](\pi[v]) = \pi[g](p(v))$$
    so $\pi[g]=pgp^{-1}$, so
    $$
    \pi[g] =pgp^{-1}= \varphi(\rho) \varphi(\gamma)\varphi(\rho)^{-1} = \varphi(\rho \gamma \rho^{-1})
    $$
    So, $\varphi(\pi[\gamma]) = \varphi(\rho \gamma \rho^{-1})$.  Since $\varphi$ is an isomorphism, this means $\pi[\gamma]=\rho \gamma \rho^{-1}$, contradicting our assumption.
\end{proof}

If we only care about whether Frucht's Theorem is true and not what $\Gamma$ is, we can take $\Gamma$ as the free abelian group on $A$.  Since $\Sigma$ acts faithfully on $A$, it acts faithfully on $\Gamma$, and since $A$ is abelian, every automorphism is outer; so any non-trivial $\pi$ will work in Theorem \ref{simplified_no_frucht}

\begin{Cor}\label{easy_no_frucht}
Suppose $M' = M/\calF$ and every $F\in \calF$ contains a non-trivial internal automorphism.  Then Frucht's Theorem fails in $M'$
\end{Cor}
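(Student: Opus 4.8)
The plan is to derive Corollary \ref{easy_no_frucht} directly from Theorem \ref{simplified_no_frucht} by exhibiting a single group $\Gamma$ for which the hypothesis of the theorem is automatically satisfied under the stated assumption. As the remark preceding the corollary suggests, I would take $\Gamma$ to be the free abelian group on the atoms $A$ (in the sense computed inside $M'$). The key observation is that this particular $\Gamma$ is constructed canonically from $A$, so $\Sigma$ stabilizes $\Gamma$ setwise, hence $\stab(\Gamma)=\Sigma$ and every $F\in\calF$ satisfies $F\in\calF\cap\stab(\Gamma)$. This reduces the hypothesis of Theorem \ref{simplified_no_frucht} to: every $F\in\calF$ contains some internal $\pi$ acting on $\Gamma$ as a non-inner automorphism — which is exactly what I must verify.

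First I would check that $\Gamma=\bigoplus_{a\in A}\ZZ$ is a well-defined symmetric group object in $M'$, and that the natural action of $\Sigma$ on $A$ extends to an action on $\Gamma$ permuting the free generators; since the construction is invariant under permutations of $A$, $\Sigma$ fixes $\Gamma$ and each $\sigma\in\Sigma$ acts on $\Gamma$ by the induced permutation of the standard basis. Because $\Sigma$ acts faithfully on $A$, and the basis elements are in bijection with $A$, this action is faithful on $\Gamma$: any $\pi$ moving some atom $a$ sends the generator $e_a$ to a distinct generator $e_{\pi(a)}$, so $\pi$ acts nontrivially on $\Gamma$. Next, since $\Gamma$ is abelian, its inner automorphism group is trivial, so the only inner automorphism is the identity. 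Therefore a nontrivial internal automorphism $\pi$ of $A$ acts on $\Gamma$ as a nontrivial, hence non-inner, automorphism — precisely the witness Theorem \ref{simplified_no_frucht} demands.

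Putting these together, I would argue as follows. Fix any $F\in\calF$; by hypothesis $F$ contains a nontrivial internal automorphism $\pi\in F\cap M'$. As just observed, $\pi$ acts nontrivially on $\Gamma$, and since $\Gamma$ is abelian this action is not inner. Because $\stab(\Gamma)=\Sigma\supseteq F$, we have $F\in\calF\cap\stab(\Gamma)$, so this $\pi$ verifies the condition of Theorem \ref{simplified_no_frucht} for $F$. As $F$ was arbitrary, the full hypothesis of Theorem \ref{simplified_no_frucht} holds for $\Gamma$, and we conclude that $\Gamma$ is not the automorphism group of any graph in $M'$. Hence Frucht's theorem fails in $M'$.

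I expect the main obstacle to be not the logical structure of the argument, which is essentially a specialization, but the bookkeeping needed to confirm that $\Gamma$ is genuinely an element of $M'$ and that the abstract action of $\pi$ on $\Gamma$ computed inside $M'$ agrees with the concrete permutation-of-basis description. In particular I would want to make sure $\Gamma$ is hereditarily symmetric: each element of $\Gamma$ is a finitely supported function $A\to\ZZ$, whose support is a finite subset of $A$, so its stabilizer contains $\stab$ of that finite set and lies in $\calF$; and $\stab(\Gamma)=\Sigma\in\calF$ since $\Gamma$ is fixed by all of $\Sigma$. Once this membership and the compatibility of the two descriptions of the action are pinned down, the non-innerness is immediate from commutativity and the rest follows directly from the theorem.
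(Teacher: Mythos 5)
Your proposal is correct and follows exactly the paper's route: specialize Theorem \ref{simplified_no_frucht} to the free abelian group on $A$, using that $\Sigma$ fixes this canonically-constructed group and acts faithfully on it, and that an abelian group has no non-trivial inner automorphisms. Your additional verification that $\Gamma$ is hereditarily symmetric (finite supports of its elements) is a detail the paper leaves implicit, but the argument is the same.
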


\subsubsection{Some explicit counterexamples}\label{counterexample}

The easiest interesting example of a permutation model is the Frankel model.  Begin with a model $M$ of ZFA such that $M\models \text{``$A$ is countably infinite"}$.  Take $\Sigma=\sym(A)$ and $\calF$ to be the filter generated by the subgroups $\stab(a)$ for $a\in A$. Then $M'=M/\calF$ is called the Frankel model.

For any $F\in \calF$ we can find $a,b\in A$ so the transposition which swaps $a$ and $b$ is in $F\cap M'$.  Hence we can apply Corollary \ref{easy_no_frucht} to see Frucht's Theorem fails in the Frankel model.  Moreover, Theorem \ref{simplified_no_frucht} gives us several other groups in the Frankel model which are not automorphism groups of graphs:
\begin{enumerate}
    \item Any $A$-fold direct sum or product of a fixed group
    \item\label{free_group} The non-abelian free group on $A$ (swapping two generators is an outer automorphism)
    \item The commutator subgroup of the free group on $A$ (for the same reason as above).  Notably, this group is not free (since no symmetric set freely generates it; see \cite[theorem 10.12]{jech})
    \item The group of alternating permutations of $A$
\end{enumerate}

The second Frankel model, in \cite[section 4.4]{jech}, shows it is possible that a countable set of pairs can have no choice function. We will show Frucht's theorem also fails in this model.

To construct the model, let $M$ be a model where $A=\{a_0, a_1, a_2, a_3, \dots\}$, and $\Sigma$ the group of permutations of $A$ setwise preserving each of $\{a_0, a_1\}, \{a_2,a_3\}, \dots$ (so $\Sigma\cong \prod\limits_{n\in \NN} C_2$).  Let $\calF$ be the filter on $\Sigma$ generated by subgroups $\stab(a_i)$.  The second Frankel model is $M/\calF$.

In this example, $\Sigma$ only contains iternal automorphisms of $A$. To see this, notice that, for any $\sigma, \pi\in \Sigma$, $\sigma[\pi] = 
\sigma \pi \sigma^{-1}$, so the stabilizer of $\pi$ as an element of $M$ is precisely it's centralizer in $\Sigma$.  Since $\Sigma$ is abelian, this is $\Sigma$.  From Corollary \ref{easy_no_frucht}, it easily follows that Frucht's Theorem fails in $M'$

\subsection{Some ZFA examples where Frucht's Theorem holds}\label{non-counterexamples}
There are various trival examples of ZFA models where Frucht's Theorem holds.  For instance, any ZFA model where AC holds will satisfy Frucht's theorem, since any colored graph can by colored by ordinals.  Also, any ZF model can be regarded as a ZFA model, where $A=\varnothing$.

Frucht's theorem can also hold in ZFA for non-trivial reasons: 
\begin{Thm}\label{labels_implies_frucht_ZFA}
    Suppose $M\models \text{``there is a function from $A$ into non-isomorphic RCPGs"}$.  Then $M\models\text{Frucht's theorem}$.
\end{Thm}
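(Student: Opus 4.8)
The plan is to rerun the construction of Section \ref{zf} essentially verbatim, the only genuinely new ingredient being the base case of the recursion. Recall that the ZF argument produced, inside the model, a single function $R$ assigning to every set a RCPG, with distinct sets receiving pairwise non-isomorphic graphs, and then fed this family of labels into Lemma \ref{enough_labels_implies_fructh}. That recursion started from the empty labeling of $V_0=\varnothing$ and climbed the rank hierarchy by iterating Lemma \ref{inductively_building_labels}. Over ZFA the sole obstruction is that $V_0=\{\varnothing\}\cup A$ is no longer empty, so the recursion has no base case; the hypothesis of the theorem is precisely what repairs this.

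First I would fix the base case inside $M$. The hypothesis gives a function $R_A$ from $A$ to pairwise non-isomorphic RCPGs; I extend it to a function $R_0$ on all of $V_0$ by sending $\varnothing$ to a single RCPG $H$ isomorphic to none of the $R_A(a)$. Such an $H$ exists and can be named without any appeal to choice: there is a definable proper-class sequence of pairwise non-isomorphic RCPGs (for instance canonical rigid trees indexed by the ordinals), only set-many of whose types can be realized in the range of $R_A$, so by replacement that set of indices is bounded and I may take $H$ to be the canonical graph of least index above the bound.

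Next I would iterate Lemma \ref{inductively_building_labels} by transfinite recursion on rank, exactly as in Section \ref{zf}. At a successor stage, with $R_\alpha$ already defined on $V_\alpha$, I apply the lemma with $S=V_\alpha$ to obtain labels for every $x\in\calP(V_\alpha)\setminus V_\alpha$, that is, for every new set of the next rank, and let $R_{\alpha+1}$ be $R_\alpha$ together with these new labels; at limit stages I take unions, which cohere because each application of the lemma extends the previous function. Since Lemma \ref{inductively_building_labels} is an explicit, definable construction, the recursion theorem applies with no use of choice, so $R=\bigcup_\alpha R_\alpha\in M$ assigns pairwise non-isomorphic RCPGs to all sets. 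With $R$ in hand the theorem follows as before: given a group $\Gamma\in M$, the colored directed Cayley graph $\widetilde{\cay}(\Gamma,\Gamma)$ has automorphism group $\Gamma$ and color set $\Gamma$, and $R\restr\Gamma$ supplies a set of pairwise non-isomorphic RCPGs equinumerous with the colors, so Lemma \ref{enough_labels_implies_fructh} converts it into a simple graph $G'\in M$ with $\aut(G')\cong\Gamma$.

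The main obstacle is the one place where ZFA genuinely differs from ZF: I must verify that the externally supplied atom labels remain non-isomorphic to every graph the recursion manufactures for a set. In ZF all labels issue from the recursion, so their mutual non-isomorphism is automatic; here an atom could in principle have been handed a RCPG that the construction later reproduces for some set. Because each atom lies in every $V_\alpha$ but not in $\calP(V_\alpha)$, this clash is not ruled out by the bare statement of Lemma \ref{inductively_building_labels}. I would resolve it the same way Sabidussi's construction already secures non-isomorphism across distinct ranks: the construction sketched for Lemma \ref{inductively_building_labels} has free parameters (the sizes of the attached gadgets) that can be pushed past any prescribed \emph{set} of isomorphism types, so at each stage the new labels can be forced to avoid the set-many types realized by the fixed atom labels, every such choice being made canonically. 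Granting this bookkeeping, detailed in \cite{details}, the whole construction is carried out inside $M$ and Frucht's theorem holds there.
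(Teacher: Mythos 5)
Your proposal is correct and follows the paper's own (very brief) argument exactly: use the hypothesized labeling of the atoms as the base case, iterate Lemma \ref{inductively_building_labels} along the rank hierarchy, and finish with Lemma \ref{enough_labels_implies_fructh} applied to $\widetilde{\cay}(\Gamma,\Gamma)$. The ``main obstacle'' you flag is already absorbed into the statement of Lemma \ref{inductively_building_labels}, since the extended function on $\calP(S)$ is itself required to take values in \emph{pairwise} non-isomorphic RCPGs, which includes non-isomorphism with the atom labels already in the domain.
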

This follows from the same proof of Frucht's theorem in section \ref{zf}; we apply lemma \ref{inductively_building_labels} inductively to construct an RCPG for every set, then conclude Frucht's Theorem by lemma \ref{enough_labels_implies_fructh}.

\subsubsection{The random graph model}
Let $M$ be a model of ZFA where $A$ is countable and $E$ be a binary relation on $A$ so $\langle A,E\rangle$ is (isomorphic to) the countable random graph (which we will simply call $A$).  Let $\Sigma$ be the group of graph automorphisms of $A$.  Again, we will take $\calF$ to be the filter generated by $\stab(a)$ for $a\in A$.  We will call $M'=M/\calF$ the random graph model.

We want to show that $M'$ satisfies Frucht's Theorem, using theorem \ref{labels_implies_frucht_ZFA}.  We will show the neighborhood of each $a\in A$ is a RCPG, and no two atoms have isomorphic neighborhoods.  To develop an intuition why this is so, we will first show the atoms form a rigid graph in $M$.

\begin{Lem}\label{once_lemma_10}
$A$ has no internal automorphisms in $M'$
\end{Lem}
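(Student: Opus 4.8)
The plan is to show that every internal automorphism of $A$ is trivial. First I would recall that, because $\calF$ is generated by the pointwise stabilizers $\stab(a)$, an automorphism $\pi\in\Sigma$ lies in $M'$ exactly when it has a finite support: a finite $B\subseteq A$ with $\stab(B)\subseteq\stab(\pi)$, where $\stab(B)=\bigcap_{b\in B}\stab(b)$ is the pointwise stabilizer in $\Sigma=\aut(A)$. As in the discussion of the second Frankel model, the action on a bijection is by conjugation, $\sigma[\pi]=\sigma\pi\sigma^{-1}$, so $\stab(\pi)$ is precisely the centralizer $C_\Sigma(\pi)$. Thus the whole problem reduces to showing: if $\pi\in\aut(A)$ and every automorphism fixing $B$ pointwise commutes with $\pi$, then $\pi=\mathrm{id}$. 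The only structure I will use is the homogeneity and one-point extension property of the countable random graph.

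The first key step is to prove $\pi$ fixes every $c\in A\setminus B$. Fix such a $c$ and set $F=B\cup\{c\}$. For any $\sigma\in\stab(F)$ we have $\sigma\pi\sigma^{-1}=\pi$, hence $\sigma(\pi(c))=\pi(\sigma(c))=\pi(c)$, so $\pi(c)$ is fixed by every element of $\stab(F)$. I would then show the only points fixed by all of $\stab(F)$ are those of $F$ itself: given $y\notin F$, the extension property produces another vertex $y'\neq y$ realizing the same adjacency pattern over the finite set $F$, and homogeneity extends the partial isomorphism fixing $F$ and sending $y\mapsto y'$ to an element of $\stab(F)$ moving $y$. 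Hence $\pi(c)\in F$, i.e. $\pi(c)\in B\cup\{c\}$. To rule out $\pi(c)\in B$, suppose $\pi(c)=b\in B$ and pick $\sigma\in\stab(B)$ with $\sigma(c)=c'\neq c$ (such $\sigma$ exists by the same homogeneity argument applied to $B$); then $\pi(c')=\sigma(\pi(c))=\sigma(b)=b=\pi(c)$, contradicting injectivity of $\pi$. Therefore $\pi(c)=c$ for all $c\in A\setminus B$.

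The second step is to show $\pi$ also fixes $B$. Since $\pi$ is now the identity off the finite set $B$, it permutes $B$; suppose toward a contradiction $\pi(b)=b'$ with $b\neq b'$ in $B$. Because $\pi$ is a graph automorphism fixing every $x\in A\setminus B$, for such $x$ we get $x\sim b\iff \pi(x)\sim\pi(b)\iff x\sim b'$, so $b$ and $b'$ have exactly the same neighbours outside $B$. But the extension property applied to $U=\{b\}$ and $W=B\setminus\{b\}$ yields a vertex $x\notin B$ adjacent to $b$ and to no other element of $B$, in particular $x\sim b$ while $x\not\sim b'$ --- a contradiction. Hence $\pi$ fixes $B$ pointwise and $\pi=\mathrm{id}$.

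I expect the main obstacle to be purely in packaging the random-graph combinatorics cleanly: every step rests on the same two consequences of the one-point extension property --- that $\stab(F)$ has no fixed points outside the finite set $F$, and that finitely many vertices can always be separated by a neighbour lying outside a prescribed finite set --- so the work is to state these once and apply them uniformly. Everything else is the translation of \emph{symmetric automorphism} into \emph{finite support, hence centralizing a stabilizer}, which the model's conjugation action makes immediate.
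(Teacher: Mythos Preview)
Your argument is correct, and it takes a genuinely different route from the paper's. The paper factors the proof through an auxiliary combinatorial lemma about the random graph (Lemma~\ref{rigidrandgraph}): given any nontrivial automorphism $f$ and finite set $E$, one can find an automorphism $\pi$ fixing $E$ pointwise that fails to commute with $f$; the contradiction then comes from $\pi[f]=f$ versus $\pi[f]=\pi f\pi^{-1}\neq f$. You instead prove the contrapositive directly: assuming $\stab(B)$ centralizes $\pi$, you first pin down $\pi(c)$ for $c\notin B$ via the clean observation that the only $\stab(F)$-fixed points are the elements of $F$, and then eliminate any nontrivial action on $B$ by a neighbour-separation argument. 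Your approach is more self-contained and avoids the case analysis in the appendix proof of Lemma~\ref{rigidrandgraph}; the paper's approach has the advantage that the same auxiliary lemma (with its ``moreover'' clause about neighbours of a fixed vertex) is reused to show each neighbourhood $G_a$ is rigid, so the extra packaging pays off later.
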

We should remark the set of bijections on $A$ is not empty in $M'$; but no bijections on $A$ in $M'$ preserve the graph structure.  The proof of Lemma \ref{once_lemma_10} makes use of the following elementary result, whose proof we leave for the Appendix:
\begin{restatable}{Lem}{randgraphlemma}
\label{rigidrandgraph}
Let $f$ be a non-trivial automorphism of a countable random graph, and $E$ be a finite subset of the countable random graph.  There is an automorphism $\pi$ of the random graph which does not commute with $f$ and fixes $E$
pointwise.

Moreover, if $f(e) = e$ for some $e\in E$, we can chose $\pi$ so there is a vertex $v$ adjacent to $e$ such that $\pi f(v)\neq f\pi(v)$
\end{restatable}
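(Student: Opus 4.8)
The plan is to prove both assertions using the standard back-and-forth / extension property of the countable random graph. Recall the defining feature of the random graph: given any two disjoint finite sets of vertices $U$ and $W$, there is a vertex adjacent to everything in $U$ and to nothing in $W$. This is the only tool I expect to need, but I will use it repeatedly to hand-build the automorphism $\pi$ while controlling its behavior on a finite set.

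For the first claim, I would start by fixing a vertex $u$ on which $f$ acts non-trivially, so $f(u)=u'\neq u$. The strategy is to construct $\pi$ fixing $E$ pointwise but with $\pi(u')$ chosen so that $\pi f$ and $f\pi$ disagree somewhere. The cleanest route is to arrange that $\pi$ fixes $u$ (ensuring $f\pi(u)=f(u)=u'$) while sending $u'$ to some vertex $w\neq u'$ with $w$ chosen so that $\pi f(u)=\pi(u')=w\neq u'=f\pi(u)$; the only requirement is that $\pi$ is a genuine automorphism extending a finite partial map that fixes $E\cup\{u\}$ pointwise and moves $u'$. To make such a finite partial isomorphism extend to a full automorphism, I would invoke the back-and-forth argument: any finite partial isomorphism of the random graph extends to an automorphism, since at each step the extension property lets us realize the required adjacency pattern over any finite configuration. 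The one technical point to watch is that the partial map I specify must itself be a partial isomorphism — i.e. the chosen image $w$ of $u'$ must have the same adjacencies to $E\cup\{u\}$ as $u'$ does; this is exactly what the extension property guarantees, by picking $w$ adjacent to the neighbors of $u'$ in $E\cup\{u\}$ and non-adjacent to the non-neighbors, while also demanding $w\neq u'$ (and $w\notin E\cup\{u,u'\}$, which is possible since only finitely many vertices are excluded).

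For the \emph{moreover} clause, assume $f(e)=e$ for some $e\in E$. Since $f$ is non-trivial, it moves some vertex, and I would use the extension property to locate a vertex $v$ adjacent to $e$ on which I can force $\pi f(v)\neq f\pi(v)$. Concretely, I would first choose $v$ adjacent to $e$ and lying outside $E\cup f[E]$ with $f(v)\neq v$ (available because $f$ moves infinitely many vertices of each adjacency type, again by the random-graph homogeneity), then build $\pi$ fixing $E$ and $v$ pointwise but moving $f(v)$ to a vertex $w$ distinct from $f(v)$ and realizing the correct type over the relevant finite set. Then $f\pi(v)=f(v)$ while $\pi f(v)=\pi(f(v))=w\neq f(v)$, giving the desired non-commutation witnessed by a neighbor of $e$. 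The bookkeeping here is to ensure all the chosen points are distinct and that the finite partial map remains a partial isomorphism before extending it.

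The main obstacle I anticipate is not any single deep step but the careful case management: in each construction I must guarantee simultaneously that (i) the finite partial map fixes $E$ (or $E\cup\{u\}$) pointwise, (ii) it is a partial isomorphism so that it extends to a full automorphism, and (iii) the image point I introduce is genuinely different from its original, so that the commutator is witnessed. All three are routine consequences of the extension property, but I would isolate a small lemma — ``every finite partial isomorphism of the random graph extends to an automorphism'' — and cite it once, rather than re-deriving the back-and-forth inside each case, to keep the argument clean. The subtlety that requires the most care is verifying that $f$ actually moves a suitable vertex of the right adjacency type relative to $E$; here I would use that an automorphism fixing all but finitely many vertices of the random graph is trivial, so a non-trivial $f$ moves infinitely many vertices and in particular moves vertices realizing any prescribed finite adjacency pattern.
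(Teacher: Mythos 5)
Your construction is essentially the paper's: in both arguments one extends a finite partial isomorphism that fixes $E$ (plus one moved vertex) pointwise while displacing the image of that vertex to a point of the same type, and your proof of the first claim is correct modulo bookkeeping you already flag --- note that you must choose $u$ with $f(u)\notin E\cup\{u\}$ (possible since $f$ is injective and moves infinitely many points), as otherwise ``fix $E$ pointwise and move $u'$'' is not even a well-defined partial map; similarly in the second part you want $v\notin f^{-1}[E]$ rather than $v\notin f[E]$ to guarantee $f(v)\notin E$.

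The one step that is genuinely unjustified is the availability, in the \emph{moreover} clause, of a vertex $v$ adjacent to $e$ with $f(v)\neq v$ lying outside the finite exclusion set. You derive this from ``$f$ moves infinitely many vertices, and in particular moves vertices realizing any prescribed finite adjacency pattern,'' but the ``in particular'' is a non sequitur: from the first fact alone, $f$ could a priori move infinitely many vertices all of which are non-adjacent to $e$. The claim you need is true, but it requires its own extension-property argument rather than following from the count of moved points. The paper's route: since $f(e)=e$, $f$ preserves the neighborhood $G_e$ setwise, and $G_e$ is itself a countable random graph, so it suffices to see that $f\restr G_e$ is non-trivial; and indeed if $f$ fixed $\{e\}\cup G_e$ pointwise while moving some $v$ to $w\neq v$, then a vertex adjacent to $e$, adjacent to $v$, and non-adjacent to $w$ would be fixed by $f$ yet adjacent to $v$ and not to $f(v)$, contradicting that $f$ is an automorphism. (Essentially the same two-line argument proves the general statement you wanted to invoke, that a non-trivial automorphism moves infinitely many vertices of any prescribed finite type.) With that small lemma supplied, your proof is complete and coincides with the paper's.
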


\begin{proof}[Proof of Lemma \ref{once_lemma_10}]
For the sake of contradiction, suppose $f$ is a non-trivial internal graph automorphism of $A$ in $M'$, and let $E$ be a support of $f$.  Let $\pi$ be an automorphism of $A$ as in Lemma \ref{rigidrandgraph} which fixes $E$ and does not commute with $f$.

Since $\pi$ fixes $E$, we must have $\pi[f]=f$.  Notice the action of $\pi$ on $\aut(A)$ in $M'$ is precisely by conjugation; $f(a) = b$ iff $(\pi [f])(\pi[a]) = \pi[b]$, so $\pi[f] = \pi f \pi^{-1}$.  However, $\pi$ and $f$ do not commute, so this is a contradiction.
\end{proof}

For each $a\in A$, let $G_a$ denote the neighborhood of $a$ in $A$, an induced subgraph of $A$ with $a$ as the distinguished vertex.

\begin{Lem}
Assigning $a$ to $G_a$ is a function from $A$ into pairwise non-isomorphic RCPGs; that is,
\begin{enumerate}
    \item If $a,b\in A$ are distinct atoms, $G_a$ and $G_b$ are not isomorphic
    \item Each $G_a$ is rigid
    \item Each $G_a$ is connected
\end{enumerate}
\end{Lem}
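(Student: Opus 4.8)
The plan is to handle all three clauses with one idea, exploiting two facts about the atoms: that the induced subgraph on the open neighbourhood $N(a)$ of any atom is again a countable random graph, and that (exactly as in Lemma \ref{once_lemma_10}) an internal automorphism of $A$ fixing a support of an object acts trivially on it, while acting on a map between neighbourhoods by conjugation. Connectivity, clause (3), is immediate: since $a$ is adjacent to every vertex of $N(a)$, the distinguished vertex $a$ is adjacent to all other vertices of $G_a$, so $G_a$ is connected.

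For rigidity, clause (2), I would suppose toward a contradiction that $h\in M'$ is a non-trivial automorphism of the pointed graph $G_a$, and let $E\ni a$ be a finite support. Because $a$ is distinguished, $h$ fixes $a$ and restricts to an automorphism of the random graph $N(a)$. The key step is to show the set $S=\{u\in N(a): h(u)\neq u\}$ is infinite: if $S$ were finite and non-empty, then for $u\in S$ the extension property inside $N(a)$ yields a vertex $y\in N(a)\setminus S$ adjacent to $u$ but not to $h(u)$, and since $h$ fixes $y$ this contradicts that $h$ preserves edges. With $S$ infinite and $E$ finite I can pick $u\in S\setminus E$ and, by homogeneity of the random graph, produce $\pi\in\Sigma$ fixing $E\cup\{h(u)\}$ pointwise while moving $u$. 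Since $\pi$ fixes the support $E$ we get $\pi[h]=h$, and as in Lemma \ref{once_lemma_10} this means $\pi h\pi^{-1}=h$ on $N(a)$; but $\pi h(u)=h(u)\neq h(\pi(u))=h\pi(u)$ (using injectivity of $h$ and $\pi(u)\neq u$), a contradiction.

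For non-isomorphism, clause (1), I would suppose $a\neq b$ and that $\phi\in M'$ is a pointed isomorphism $G_a\to G_b$ with finite support $E\supseteq\{a,b\}$; then $\phi(a)=b$ and $\phi$ restricts to an isomorphism $N(a)\to N(b)$. Here the key step is that $\{u\in N(a): \phi(u)\neq u\}$ is infinite, because any $u$ with $\phi(u)=u$ lies in $N(a)\cap N(b)$, whereas the extension property guarantees $N(a)\setminus N(b)$ is infinite. Choosing $u\in N(a)\setminus E$ with $\phi(u)\neq u$ and, by homogeneity, $\pi\in\Sigma$ fixing $E\cup\{\phi(u)\}$ and moving $u$, the support condition forces $\pi[\phi]=\phi$, i.e. $\pi\phi\pi^{-1}=\phi$; but $\pi\phi(u)=\phi(u)\neq\phi(\pi(u))=\phi\pi(u)$, again a contradiction.

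I expect the main obstacle to be exactly the two ``infinitely many moved points'' steps, i.e. guaranteeing that $h$ (resp. $\phi$) moves a point of $N(a)$ lying outside the finite support $E$. This is the only place where genericity of the random graph is genuinely needed: the fact that distinct vertices differ in infinitely many adjacencies both rules out a non-trivial automorphism supported on a finite set and forces $N(a)\setminus N(b)$ to be infinite. Everything else is the support-and-conjugation bookkeeping already used in Lemma \ref{once_lemma_10}, together with the standard homogeneity of $A$ used to manufacture the witnessing internal automorphism $\pi$; the relevant genericity statements are the ones packaged in Lemma \ref{rigidrandgraph}.
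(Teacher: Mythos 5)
Your proof is correct and takes essentially the same approach as the paper: in each clause you pass to a support $E$, manufacture an external automorphism $\pi$ fixing $E$ pointwise but failing to commute with the alleged map, and contradict $\pi[f]=\pi f\pi^{-1}=f$. The only cosmetic differences are that you re-derive the ``infinitely many moved points'' fact inline rather than citing the ``moreover'' clause of Lemma \ref{rigidrandgraph}, and that in clause (1) you use the pointed structure, whereas the paper's argument shows the slightly stronger fact that no bijection $G_a\to G_b$ of any kind exists in $M'$.
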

\begin{proof}
\begin{enumerate}
    \item Suppose $G_a$ and $G_b$ are isomorphic in $M'$; then there is some bijective function $f$ in $M'$ from $G_a$ to $G_b$.  Let $E$ be a support of $f$.  Pick $x\in A\setminus E$ such that $x$ is adjacent to $a$ and not $b$ (so $x\in G_a\setminus G_b$).  Let $y$ be a vertex in $G_b$ such that $y\neq f(x)$, but $y$ and $f(x)$ are adjacent to the same verticies in $E\cup\{a,b,x\}$.  Since automorphisms of finite subsets extend to a countable random graph, there is (externally) an automorphism $\pi$ which fixes $E\cup\{a,b,x\}$ and swaps $f(x)$ and $y$.  Since $\pi$ fixes $E$, $\pi[f]=f$.  However, $\pi(f(x)) = y \neq f(x) = f(\pi(x))$, so $\pi[f]\neq f$.
    
    \item Suppose $f$ is an automorphism of $G_a$, and let $E$ be a support of $f$.  Without loss of generality, suppose $a\in E$.  By our stronger clause in lemma \ref{rigidrandgraph}, we can construct $\pi$ which stabilizes $E$ so that $\pi(f(\pi^{-1}(x)))\neq f(x)$ for some $x\in G_a$ (this composition makes sense because $\pi$ fixes $a$, so $\pi^{-1}(x)\in G_a$).  However, $\pi$ fixes $E$, so we should have $\pi[f]=f$.  So, $G_a$ is rigid.
    \item Every vertex in $G_a$ is attached to $a$, so $G_a$ is connected.
\end{enumerate}
\end{proof}

By theorem \ref{labels_implies_frucht_ZFA}, Frucht's theorem holds in the random graph model.

\subsubsection{The Ordered Mostowski Model}\label{ordered-mostowski}
While I do not know of literature references for the random graph model, this model is well studied.  See \cite[section 4.5]{jech}.

Begin with a model $M$ where $A$ is countable and $\leq$ is a dense linear order without endpoints on $A$.  Let $\Sigma$ be the group of order preserving automorphisms of $A$ in $M$, and $\calF$ be the filter generated by subgroups $\stab(a)$ for $a\in A$.  The ordered Mostowki model $M'$ is the permutation model $M/\calF$.

As above, we will use theorem \ref{labels_implies_frucht_ZFA}.  First, we will show that for each $a\in A$, $\pred(a)=\{x\in A | x\leq a\}$ is a rigid (in $M'$) linear order, and uniquely determines $a$. Then, we encode these linear orders as graphs with the same automorphism group.

\begin{Lem}\label{orders_rigid_in_mostowski}
In $M'$, $\langle A,\leq\rangle$ is a rigid linear order.

Moreover, for each $a\in A$, the initial segment $\pred(a)$ of $A$ is also a rigid linear order, and no two initial segments are isomorphic.
\end{Lem}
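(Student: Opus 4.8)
The plan is to exploit the same conjugation principle used in Lemma \ref{once_lemma_10}, now for order-preserving automorphisms instead of graph automorphisms. I want to show that any order automorphism of $\langle A,\leq\rangle$ in $M'$ must be the identity, and likewise that any isomorphism between two initial segments forces a collision that a well-chosen external automorphism can break.

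For the first claim, suppose $f\in M'$ is a nontrivial order automorphism of $\langle A,\leq\rangle$, with support $E$. Since $f$ is nontrivial, there is some $a$ with $f(a)\neq a$, say $f(a)>a$ (the other case is symmetric). The key fact I would use is that $\Sigma$, the order-preserving automorphisms of the dense linear order $A$, acts on $A$ in a very flexible way: given any finite $E$ and any two points lying in the same "gap" relative to $E$ (same position among the finitely many cut points of $E$), there is an order automorphism fixing $E$ pointwise and moving one to the other. Concretely, I would pick a point $x\notin E$ and an order automorphism $\pi\in\Sigma$ fixing $E$ pointwise but with $\pi(x)\neq x$, chosen so that $\pi$ does not commute with $f$. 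Because $\pi$ fixes $E$ and $E$ supports $f$, we have $\pi[f]=f$; but as in Lemma \ref{once_lemma_10}, the action of $\pi$ on $\aut(\langle A,\leq\rangle)$ is by conjugation, so $\pi[f]=\pi f\pi^{-1}$, and noncommutativity gives the contradiction. So I need an auxiliary combinatorial fact, analogous to Lemma \ref{rigidrandgraph}: \emph{for any nontrivial order automorphism $f$ and any finite $E$, there is $\pi\in\Sigma$ fixing $E$ pointwise that does not commute with $f$.} This is easy in the dense order setting because order automorphisms fixing a finite set can act independently and densely on each open interval between consecutive points of $E$, so one can perturb within whatever interval contains the witness to $f$'s nontriviality.

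For the "moreover" clause I would argue similarly. Each $\pred(a)$ is a suborder, and its only support-compatible automorphisms must fix $a$ as the maximum. Rigidity of $\pred(a)$ follows the same pattern: an automorphism $f$ of $\pred(a)$ with support $E$ can be broken by an external order automorphism fixing $E\cup\{a\}$ that moves a point of $\pred(a)$ outside $E$, again using density to find a noncommuting $\pi$. For the claim that no two initial segments are isomorphic, suppose $f\in M'$ is an isomorphism $\pred(a)\to\pred(b)$ with $a\neq b$, support $E$; then as in part (1) of the neighborhood lemma I would pick a point $x$ witnessing the asymmetry (e.g. $x\in\pred(a)\setminus\pred(b)$ when $b<a$) and an external order automorphism fixing $E$ that moves $f(x)$ to a distinct point occupying the same position relative to $E\cup\{a,b,x\}$, contradicting $\pi[f]=f$.

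The main obstacle, and the only real difference from the random-graph case, is the rigidity of $\Sigma$ itself: in a dense linear order the order automorphisms fixing a finite set $E$ can still move everything in the open intervals between points of $E$, so I must verify that such a $\pi$ can always be chosen to genuinely fail to commute with $f$. This amounts to checking that the witness to $f$'s nontriviality lies in some interval on which $\pi$ can act freely, which density guarantees; I expect this to be the step requiring the most care, and it is the analogue of Lemma \ref{rigidrandgraph} that I would state and prove (or relegate to the appendix) before running the conjugation argument above.
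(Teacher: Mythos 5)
Your proposal is correct, and for the main claim (rigidity of $\langle A,\leq\rangle$) it is essentially the paper's argument: find a point moved by $f$ lying outside the support $E$, use density to build an external order automorphism $\pi$ fixing $E$ pointwise that fails to commute with $f$, and contradict $\pi[f]=\pi f\pi^{-1}=f$. (The paper's concrete choice is $\pi(x)>x$ and $\pi(f(x))<f(x)$; your ``fix $x$, move $f(x)$'' variant works just as well, and your auxiliary density lemma is the right analogue of Lemma \ref{rigidrandgraph}.) Where you diverge is the ``moreover'' clause. You re-run the support/conjugation argument separately for automorphisms of $\pred(a)$ and for isomorphisms $\pred(a)\to\pred(b)$, in the style of the random-graph neighborhood lemma. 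The paper instead observes two things: first, any order automorphism of $\pred(a)$ extends by the identity above $a$ to an order automorphism of $A$, so rigidity of $\pred(a)$ is free from rigidity of $A$; second, the rigidity argument for $A$ never uses surjectivity, so in $M'$ there is no non-identity order-preserving \emph{injection} $A\to A$ at all, and an isomorphism $\pred(a)\to\pred(b)$ would extend by the identity to such an injection missing the interval between $a$ and $b$. The paper's route proves the conjugation step once and gets the other two claims by a one-line reduction; yours is more uniform and self-contained but requires verifying in each case that a suitable moved point with $f(x)\notin E\cup\{a,b,x\}$ exists (routine, since $f$ is injective and the relevant interval is infinite by density). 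Both are sound.
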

\begin{proof}
Suppose $f\colon A\to A$ is a non-identity order automorphism in $M'$.  Let $E\subseteq A$ be a support of $f$.  


Clearly $\{x | f(x)\neq x\}$ is infinite, so since $E$ is finite, we can pick $x\in A$ so that $f(x)\neq x$ and both $x, f(x)\notin E$.  Let $\pi$ be an order automorphism of $A$ in $M$ which fixes $E$ so $\pi(x)>x$ and $\pi(f(x))<f(x)$ (in $M$, $A$ is isomorphic to $\mathbb{Q}$, where this is clearly possible).  

Since $\pi$ fixes $E$, we must have $\pi[f]=f$.  We know $\pi$ acts by conjugation, so $\pi[f](\pi(x))) = \pi (f(\pi^{-1}(\pi(x)))) = \pi(f(x))<f(x)$.  However, $\pi(x)>x$, so $f(\pi(x))>f(x)$.  So, $\pi(f)(\pi(x))\neq f(\pi(x))$, so $\pi[f]\neq f$.  This is a contradiction, so $A$ is rigid in $M'$.

Each $\pred(a)$ is rigid in $M'$ because if $g$ is an order automorphism of $\pred(a)$, we can extend it to an automorphism of all of $A$ by:
$$
x\mapsto\begin{cases}
g(x) & x\leq a\\
x & x>a
\end{cases}
$$

To see no two $\pred(a)$'s are isomorphic, we remark that our argument $A$ is rigid doesn't depend on surjectivity of $f$.  So, there is no non-identity injective order preserving function from $A$ to $A$.  However, suppose $g$ were an order isomorphism between $\pred(a)$ and $\pred(b)$ for some $a<b$.  Then:
$$
x\mapsto\begin{cases}
g(x) & x\leq b\\
x & x>b
\end{cases}
$$
is an order preserving map from $A$ to $A\setminus (a,b)\subsetneq A$; but no such injection exists.

\end{proof}

Next, we need to encode linear orders in graph.  In fact, one can encode arbitrary first order structures (at least with finite signatures) in graphs; one can view this as a very special case of the representability machinery in \cite[chapter 5]{hodges} (or, for a clearer construction, see \cite[section 4.3]{simon}).  

Given the machinery in section \ref{zf} to convert directed graphs into graphs, we can use a much simpler construction:

\begin{Def}
Let $\langle L,\leq\rangle$ be a linear order. Regard $\langle L, <\rangle$ as a directed graph.  Let $\Gamma_L$ be the (non-directed) graph constructed from $\langle L,<\rangle$ as described in in section \ref{zf}.
\end{Def}

\begin{restatable}{Lem}{orderToGraph}
\label{ordercoding}
If $L$ and $L'$ are linear orders
\begin{enumerate}
    \item $L$ and $L'$ are isomorphic orders iff $\Gamma_L$ and $\Gamma_L'$ are isomorphic graphs
    \item $\Gamma_L$ is a rigid graph iff $L$ is a rigid order.
    \item $\Gamma_L$ is connected
\end{enumerate}
\end{restatable}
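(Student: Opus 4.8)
The plan is to treat all three parts as consequences of a single fact: the direction-encoding construction of section \ref{zf} is functorial on isomorphisms, turning the directed graph $\langle L,<\rangle$ into a simple graph $\Gamma_L$ so that order isomorphisms, graph isomorphisms, and automorphisms all correspond. Recall that this construction keeps the vertices of $L$ (the \emph{old} vertices) and, for each pair $x<y$, replaces the directed edge $x\to y$ by an \emph{edge gadget}: a subdivided path $x - m_1 - m_2 - y$ carrying a pendant stick of length $2$ at $m_1$ and a pendant stick of length $3$ at $m_2$. Because the two sticks have distinct lengths, each gadget is itself asymmetric, and its source end (meeting $x$) is distinguishable by graph structure alone from its target end (meeting $y$).

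Claim (3) I would dispatch first, as it is immediate. If $|L|\le 1$ then $\Gamma_L$ has at most one old vertex and no gadgets, so it is (vacuously) connected. Otherwise any two old vertices are comparable, hence joined by a path through one edge gadget; and every gadget vertex either lies on such a path or hangs off it along a stick, so it too is joined to an old vertex. Thus all of $\Gamma_L$ lies in one component.

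For claim (1), the direction $(\Leftarrow)$ is just functoriality: an order isomorphism $L\to L'$ is exactly a directed-graph isomorphism $\langle L,<\rangle\to\langle L',<\rangle$, and since $\Gamma$ is built by the same uniform recipe on both sides, I would transport it to a graph isomorphism $\Gamma_L\to\Gamma_{L'}$ by sending old vertices via the order isomorphism and each gadget to the corresponding gadget. For $(\Rightarrow)$, given a graph isomorphism $\Phi\colon\Gamma_L\to\Gamma_{L'}$, I would argue that the edge gadgets meet one another only at old vertices and are rigid asymmetric subgraphs, so $\Phi$ must carry old vertices to old vertices and gadgets to gadgets; since each gadget's source end is distinguishable from its target end, $\Phi$ respects the orientation of every gadget. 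Hence the restriction of $\Phi$ to old vertices is a bijection $L\to L'$ preserving $<$, i.e.\ an order isomorphism. This recognizing of old vertices and orientations purely graph-theoretically is exactly what the section \ref{zf} construction is designed to guarantee; it is the same analysis underlying Lemma \ref{enough_labels_implies_fructh}, and the full bookkeeping lives in \cite{details}.

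Claim (2) is then the case $L=L'$ of this correspondence. By Lemma \ref{enough_labels_implies_fructh}, $\aut(\Gamma_L)\cong\aut(\langle L,<\rangle)$, and an automorphism of the directed graph $\langle L,<\rangle$ is precisely an order automorphism of $L$; so $\Gamma_L$ is rigid iff the order-automorphism group of $L$ is trivial, i.e.\ iff $L$ is a rigid order. The main obstacle throughout is the $(\Rightarrow)$ direction of (1): establishing that an abstract graph isomorphism between the two coded graphs must preserve the coding (old vertices, gadget boundaries, and orientations). Everything else is routine once this intrinsic recognizability is in hand, and it is exactly the property the construction of section \ref{zf} provides.
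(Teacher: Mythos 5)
The paper itself omits the proof of Lemma \ref{ordercoding}, declaring it straightforward, and your argument supplies exactly the intended reasoning: all three claims reduce to the fact that in the section \ref{zf} construction the old vertices, gadget boundaries, and gadget orientations are recognizable from the graph structure alone. Your proposal is correct and takes the same route the paper has in mind, and you rightly isolate the $(\Rightarrow)$ direction of (1) --- intrinsic recognizability of the coding --- as the only step requiring genuine verification.
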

We  will not prove this, as it is straightforward, but we remark that any encoding of orders in graphs with these properties would be sufficient.

Consider the assignment sending each $a\in A$ to the graph $G_a=\Gamma_{\pred(a)}$, with $a$ as a distinguished vertex.  By lemmas \ref{orders_rigid_in_mostowski} and \ref{ordercoding}, this defines a function from $A$ into non-isomorphic RCPGs.  So, Frucht's theorem holds for the ordered Mostowski model by theorem \ref{labels_implies_frucht_ZFA}

\section{A variations of Frucht's theorem that can fail in ZF}\label{babai}

In \cite{BABAI197826}, Babai tries to minimize the number vertex orbits in a graph with a given automorphism group.  Babai's main theorem says, under AC, every group is the automorphism group of a directed graph with 1 vertex orbit (or, by an easy corollary, an undirected graph with 3 vertex orbits).  Unlike Frucht's theorem, which can be witnessed by graphs of arbitrary rank, a witness for Babai's theorem for a group $\Gamma$ can always be witnessed by a graph with vertex set contained in $\calP(\Gamma)$ (specifically, the cosets of a vertex stabilizer).

Suppose $\Gamma$ is a group where Frucht's theorem fails in some permutation model $M$, and $\Gamma\in V_\alpha^M$.  Then $V_{\alpha+2}^M\models \text{Frucht's theorem fails for $\Gamma$}$.  By Jech-Sochor embedding, we can construct a symmetric model of ZF $N$ with a group $\Gamma'$ so  $\calP^2(\Gamma')^N\models \text{Frucht's theorem fails for $\Gamma'$}$.  This means Babai's theorem must fail for $\Gamma'$ in $N$.  A similar argument shows the following:
\begin{Thm}
    There is a model of ZF with a group which is not the automorphism group of a graph with finitely many vertex orbits.
\end{Thm}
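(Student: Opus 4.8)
The plan is to run the same Jech--Sochor transfer used above for Babai's theorem, but with ``has one vertex orbit'' replaced by ``has finitely many vertex orbits.'' The only genuinely new ingredient is a rank bound: I must show that the statement ``$\Gamma$ is not the automorphism group of any graph with finitely many vertex orbits,'' though it quantifies over graphs of arbitrary rank, is equivalent to a statement about a bounded initial segment $V_\beta$ with $\beta$ depending only on $\rk(\Gamma)$. Once that reduction is in place, the failure of this property can be witnessed in a permutation model and pushed into a ZF model exactly as before.

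First I would prove the rank bound. Suppose $G$ is a graph with $\aut(G)\cong\Gamma$ and only finitely many orbits $O_1,\dots,O_k$ of $\Gamma$ on $V(G)$. Since there are finitely many orbits, I can choose representatives $v_1,\dots,v_k$ (finite choice, so no appeal to AC) with stabilizers $H_i=\stab(v_i)\le\Gamma$, and then $gH_i\mapsto g\cdot v_i$ is a $\Gamma$-equivariant bijection $\Gamma/H_i\to O_i$. Transporting the edge relation along the tagged bijection $V(G)\to\bigsqcup_{i\le k}\Gamma/H_i$ produces a graph $G'$ isomorphic to $G$, hence with $\aut(G')\cong\Gamma$, whose vertices are cosets (elements of $\calP(\Gamma)$) tagged by an integer. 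Thus $G'$, together with its edge relation and automorphism group, lies in $V_{\rk(\Gamma)+c}$ for a fixed finite $c$. The key point is that this construction goes through in any model of ZFA, since it uses only finitely many choices, so ``$\Gamma$ is the automorphism group of some finitely-orbit graph'' holds iff it holds with a witness in $V_{\rk(\Gamma)+c}$. Writing $\beta=\rk(\Gamma)+c$, the failure of the property is the bounded statement ``no finitely-orbit graph in $V_\beta$ has automorphism group $\Gamma$.'' I would also note that, by the same rank bound, the automorphism group computed inside $V_\beta$ agrees with the true automorphism group, so that $\aut$ and the orbit-counting predicate are correctly expressed at level $\beta$.

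With the reduction in hand, I take the basic Fraenkel model $M$ and let $\Gamma$ be the free abelian group on $A$, which by Corollary \ref{easy_no_frucht} is the automorphism group of no graph whatsoever, and in particular of no finitely-orbit graph; so the bounded statement above holds of $\Gamma$ in $M$. Applying the Jech--Sochor embedding to $V_\beta^M$ yields a symmetric model $N$ of ZF and a group $\Gamma'$ for which the transported bounded statement holds: no finitely-orbit graph in the image of $V_\beta$ has automorphism group $\Gamma'$ in $N$. Finally I apply the rank bound a second time, now inside $N$: any finitely-orbit graph in $N$ with automorphism group $\Gamma'$ could be replaced by an isomorphic witness of rank $\le\rk(\Gamma')+c$, i.e.\ inside the transported image of $V_\beta$, contradicting the transported statement. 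Hence $\Gamma'$ is the automorphism group of no finitely-orbit graph in $N$, which proves the theorem.

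The main obstacle is the rank bound and the bookkeeping around it. I must make sure that (i) the replacement graph $G'$ really is isomorphic to $G$ as a graph, so that the two automorphism groups genuinely coincide rather than merely being abstractly isomorphic; (ii) the finite-orbit decomposition and the equivariant bijections are carried out using only finite choice, so that the argument is legitimate both in ZFA and in the ZF model $N$; and (iii) the threshold $\beta$ is chosen large enough to absorb not just the vertex set but also the edge relation, the automorphism group, and the first-order assertion that there are finitely many orbits, so that all of these notions are computed correctly inside the bounded segment transported by Jech--Sochor. Everything else is a verbatim repetition of the Babai argument sketched above.
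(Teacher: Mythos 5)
Your proposal is correct and follows essentially the same route as the paper: bound the rank of any finitely-many-orbits witness by replacing it with an isomorphic graph on a finite disjoint union of coset spaces $\Gamma/H_i$ (so the witness lives in roughly $\calP^2(\Gamma)$), then transfer the resulting bounded statement from a permutation model where Frucht's theorem fails for $\Gamma$ via the Jech--Sochor embedding. The paper only sketches this (``a similar argument shows''), and your write-up supplies exactly the details it leaves implicit.
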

It is difficult to make Babai's theorem fail much worse than this without choice; since not all cardinalities are comparable, it may be impossible to ``minimize" the number of vertex orbits in a graph with a fixed automorphism group.

\section{Further Questions}
Most examples of permutation models I know either have many non-trivial internal automorphisms (so Frucht's Theorem fails by Corollary \ref{easy_no_frucht}), or their atoms have some kind of first-order structure, which can be coded into a graph, and Frucht's Theorem holds.  So, every example I have considered supports the following conjecture, the converse of Corollary \ref{easy_no_frucht}:
\begin{Conj}\label{no_frucht_converse}
If the Axiom of Choice holds in $M$ and $M'=M/\calF$ is a permutation model inside $M$ where Frucht's Theorem fails, then every $F\in \calF$ contains a non-trivial internal automorphism.
\end{Conj}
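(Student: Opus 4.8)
The plan is to prove the contrapositive: assuming the Axiom of Choice in $M$, I will show that if some $F_0\in\calF$ contains no non-trivial internal automorphism --- that is, $F_0\cap M'=\{\mathrm{id}\}$ --- then Frucht's theorem holds in $M'$. Since $M'$ is itself a model of ZFA, Theorem \ref{labels_implies_frucht_ZFA} reduces this to producing, inside $M'$, a function from $A$ into pairwise non-isomorphic RCPGs. One cannot hope to do this by well-ordering $A$: in the random graph model $A$ is not well-orderable (a symmetric well-order with finite support $E$ would be preserved, hence rigidified, by all graph automorphisms fixing $E$, of which there are non-trivial ones), yet Frucht's theorem holds there. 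So the labels must be built \emph{locally}, generalizing the neighbourhoods $G_a$ and initial segments $\pred(a)$ of Section \ref{non-counterexamples}.

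For the construction I would attach to each atom a canonical relational structure and then code it into a graph. Let $\mathcal A=\langle A,(O)_O\rangle$ be the structure on $A$ whose relations are the orbits $O$ of $F_0$ acting on $A^n$, for all $n<\omega$. Each orbit satisfies $F_0\subseteq\stab(O)$, so $\stab(O)\in\calF$ and $O$ is hereditarily symmetric; likewise $\mathcal A$ is fixed by $F_0$ and so lies in $M'$, and $a\mapsto\langle\mathcal A,a\rangle$ is $F_0$-equivariant, hence a symmetric function of $M'$. The crux is that these pointed structures are rigid and pairwise non-isomorphic in $M'$: an $M'$-isomorphism $h\colon\langle\mathcal A,a\rangle\to\langle\mathcal A,b\rangle$ is a bijection of $A$ preserving every $F_0$-orbit, hence is an automorphism of $\mathcal A$, and thus lies in the topological closure $\overline{F_0}$ of $F_0$ in $\sym(A)$ (closed subgroups of $\sym(A)$ are exactly automorphism groups of relational structures). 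When $F_0$ is closed --- for instance when $F_0=\stab(B)$ is a support stabilizer, as in every explicit model above --- we have $\overline{F_0}=F_0$, so $h\in F_0\cap M'=\{\mathrm{id}\}$, forcing $h=\mathrm{id}$ and $a=b$; the case $a=b$ gives rigidity. It then remains to code $\langle\mathcal A,a\rangle$ into an RCPG $G_a$ using the directed-graph-to-graph machinery of Section \ref{zf} together with a generalization of Lemma \ref{ordercoding} from linear orders to arbitrary relational structures, performed $F_0$-equivariantly so that the resulting family $a\mapsto G_a$ remains symmetric.

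I expect two genuine obstacles. The first is the gap between $F_0$ and $\overline{F_0}$: for a general normal filter there is no reason $F_0$ may be taken closed, and I see no way to exclude a non-trivial internal $h\in\overline{F_0}\setminus F_0$ from $F_0\cap M'=\{\mathrm{id}\}$ alone. This gap closes exactly when $\calF$ has a base of closed subgroups, which already covers everything in Sections \ref{counterexamples}--\ref{non-counterexamples}, so I would first record the conjecture as a theorem under that hypothesis. The second, and I believe deeper, obstacle is the coding step: $\mathcal A$ carries infinitely many relations of unbounded arity, and coding them into one graph symmetrically seems to demand non-isomorphic rigid labels indexed by the \emph{signature} of $\mathcal A$ --- the set of $F_0$-orbits --- which is itself a Frucht-style labeling problem one level up, threatening an infinite regress; when the signature is finite or canonically well-ordered (as in the random graph and ordered Mostowski models) the coding is immediate. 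Turning rigid, atom-separating \emph{structure} into rigid, atom-separating \emph{graphs} without choice is precisely the step at which the converse could fail, and is where any eventual counterexample to Conjecture \ref{no_frucht_converse} should be sought.
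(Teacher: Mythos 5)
The statement you are trying to prove is Conjecture \ref{no_frucht_converse}: the paper offers no proof of it and explicitly lists it as open, so there is no paper argument to compare yours against, and your text is --- as you yourself say --- a program rather than a proof. That said, the program is sound as far as it goes, and it matches the author's own closing remarks that in every known model where Frucht's theorem holds the atoms carry a structure that becomes rigid in $M'$. Your symmetry bookkeeping checks out: each $F_0$-orbit $O\subseteq A^n$ satisfies $F_0\subseteq\stab(O)$, so $\mathcal{A}$ and the map $a\mapsto\langle\mathcal{A},a\rangle$ are hereditarily symmetric; and an $M'$-isomorphism $h\colon\langle\mathcal{A},a\rangle\to\langle\mathcal{A},b\rangle$ preserves every orbit, hence lies (in $M$) in the closure of $F_0$ in $\sym(A)$. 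When $F_0$ is closed there --- as is the case for all the support stabilizers in the paper's explicit models, since each $\Sigma$ used is itself an automorphism group of a structure on $A$ --- you correctly get $h\in F_0\cap M'=\{\mathrm{id}\}$, hence rigidity and pairwise non-isomorphism of the pointed structures.

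Both obstacles you flag are genuine, and they are exactly where the conjecture lives, so your attempt should not be recorded as a proof. First, the closure gap is real and is not merely about choosing $F_0$ wisely: the contrapositive hypothesis supplies a single $F\in\calF$ with $F\cap M'=\{\mathrm{id}\}$, while your argument needs triviality of \emph{every} internal bijection of $A$ in $\overline{F_0}$, including ones that need not even belong to $\Sigma$ (note that $\overline{F_0}\cap\Sigma\in\calF$ by upward closure, but nothing makes it one of the ``good'' filter elements). Second, the coding step is more than a technicality: to turn $\langle\mathcal{A},a\rangle$ into an RCPG via Lemma \ref{enough_labels_implies_fructh}-style labelling you need pairwise non-isomorphic rigid labels indexed by the signature of $\mathcal{A}$, i.e.\ by the set of $F_0$-orbits; although AC holds in $M$, a well-ordering of that set need not survive into $M'$, so you face the labelling problem one level up, and the arity-bookkeeping labels (finite sticks) do not help with it. A graph coding that leaves the orbit-vertices unlabelled only forces $h$ to preserve the orbit \emph{partition}, which lands you in a normalizer-type closure rather than $\overline{F_0}$, so the regress cannot be waved away. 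What you actually have is a plausible theorem under two additional hypotheses --- $\calF$ has a base of subgroups closed in $\sym(A)$, and the canonical signature can be tamed (finite, or well-orderable in $M'$) --- which covers the models of Sections \ref{counterexamples}--\ref{non-counterexamples} but falls short of Conjecture \ref{no_frucht_converse}; the paper itself predicts that the full conjecture requires either a new permutation model or a genuinely different proof of Frucht's theorem, and your analysis makes concrete why.
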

The assumption that the Axiom of Choice holds in $M$ may be unnecessarily strong.  At a minimum, we need to assume Frucht's Theorem holds in $M$, to avoid the trivial case $M'=M$ (and $\{1\}\in \calF$).

As in \cite{degroot} and \cite{sabidussi}, the Axiom of Choice implies Frucht's Theorem, even over ZFA.  This might cause one to wonder whether something weaker than the Axiom of Choice implies Frucht's Theorem.  
Several of these questions are easily answered by this paper, but there are some interesting open ones (perhaps most notably whether the Ordering Principal implies Frucht's Theorem).
I summarize results I do and do not know about this in Appendix \ref{weak_choice_and_frucht}.  

It unknown whether Babai's Theorem is consistent with the failure of choice.  I expect it holds in the random graph and ordered mostowski models, but it is difficult to prove in general.  As one example, if $\Gamma$ is the free abelian group on $A$ in the random graph model, and $S$ is the set of elements of $\Gamma$ with components 0 or 1, with the 1's occurring on a clique in $A$, then $\Gamma$ is the automorphism group of the directed graph $\cay(\Gamma,S)$.

\section{Appendix}
\subsection{Comparison of Frucht's Theorem with Weak Versions of the Axiom of Choice}\label{weak_choice_and_frucht}
Here are the implications I know between Frucht's Theorem and classical weak versions of Choice.

The following statements imply Frucht's Theorem over ZFA
\begin{enumerate}
    \item The Axiom of Choice
    \item The (Kinna Wagner) Selection Principal, which is equivalent to the statement ``for every $S$, there is an ordinal $\alpha$ and an injection $S\to \calP(\alpha)$".  Subsets of ordinals can be coded by graphs following section \ref{zf}
    \item For every $S\in M$, there is $S'\in \mathrm{Ker}(M)$ and and injection $S\to S'$ ($\mathrm{Ker}(M)=\calP^{\mathrm{Ord}}(\varnothing)$, the largest inner model of ZF) \\
    The reasoning is similar.
\end{enumerate}

The following statements are consistent with the failure of Frucht's Theorem.  All of them follow by noting Corollary \ref{easy_no_frucht} applies to the corresponding permutation model given in \cite{jech}. 
\begin{enumerate}
    \item For any fixed finite $n$, if $S$ is a set of $n$-element sets, $S$ has a choice function
    \item For every finite $n$, if $S$ is a set of $n$-element sets, $S$ has a choice function.\\
    Note that this is properly weaker than choice from sets of finite sets
    \item The Axiom of Multiple Choice (If $S$ is a set of non-empty sets, there is a function $f$ on $S$ so each $f(x)$ is a non-empty finite subset of $x$)\\
    It should be noted this is equivalent to AC over ZF
    \item For any fixed ordinal $\lambda$, $AC_\lambda$: if $S$ is a set equinumerous with some ordinal $\lambda$, then $S$ has a choice function
    \item For every ordinal $\lambda$, $AC_\lambda$\\
    (This is weaker than full AC because non-well-orderable sets may not have choice functions)
    \item For any fixed ordinal $\lambda$, Dependent Choice of length $\lambda$
    \item For any fixed ordinal $\lambda$, $W_\lambda$: for every $S$ there is an injection from $\lambda$ to $S$ or an injection from $S$ to $\lambda$
    \item Every set has a non-trivial ultrafilter\\
    This is weaker than the Prime Ideal Theorem
    \item The Hahn Banach Theorem
\end{enumerate}

It is open whether the following statements imply Frucht's Theorem:
\begin{enumerate}
    \item Prime Ideal Theorem
    \item Ordering Principal
    \item Order Extension Principal (every partial order extends to a linear order)
    \item Axiom of Choice for collections of well-orderable sets
    \item Axiom of Choice for collections of finite sets
\end{enumerate}
In all of the permutation models I know of where these hold, the atoms have some homogeneous structure, which becomes rigid in the permutation model.  So, the technique from section \ref{ordered-mostowski} can show Frucht's Theorem holds in these models.  Answering these questions would require either a clever new permutation model, or a very different proof of Frucht's Theorem, both of which would be interesting.

Lastly, I should remark that Frucht's Theorem does not imply any sort of choice, as Frucht's Theorem will hold in any ZF model (which are trivial ZFA models as well), no matter how badly Choice fails.

\subsection{Proof of lemma ~\ref{rigidrandgraph}}
\randgraphlemma*

Suppose there is an $e\in E$ so $f(e)\notin E$.  Let $\pi$ be any automorphism of the random graph so $\pi$ fixes $E$ and $\pi(f(e))\neq f(e)$ (which we can construct by extending an automorphism of a finite subgraph).  Then $\pi(e)=e$, so $f(\pi(e))=f(e)\neq \pi(f(e))$.

So, we can suppose $f$ fixes $E$ setwise.  Since $f$ is a non-trivial automorphism of the random graph, $\{v | f(v)\neq v\}$ is infinite (this is an easy, well known argument).  Since $E$ is finite, there is $v\notin E$ so $f(v)\neq v$.  Notice $f(v)\notin E$ as well, since $f$ is bijective and fixes $E$.  Let $\pi$ be an automorphism of the random graph which fixes $E\cup \{v\}$ so $\pi(f(v))\neq f(v)$.  Since $\pi(v)=v$, $f(\pi(v))=f(v)\neq \pi(f(v))$.  So, we can always fine $\pi$ so $f$ and $\pi$ don't commute.

Moreover, suppose there is some $e\in E$ so $f(e)=e$.  Let $G_e$ be the set of verticies adjacent to $e$.  If there is $e'\in E\cap G_e$ so $f(e')\notin E$, we can let $\pi$ be an automorphism which fixes $E$ but moves $f(e')$, and $\pi(f(e'))\neq f(\pi(e'))$ as above.  

Otherwise, $f$ fixes $E\cap G_e$ setwise. Since $f(e)=e$, $f\restr G_e$ is an automorphism of the random graph $G_e$, so $\{v\in G_e|f(v)\neq v\}$ is infinite, and $E\cap G_e$ is finite, so we can find $v\in G_e\setminus E$ so $f(v)\neq v$.  Notice $f(v)\notin E\cap G_e$ since $f$ is bijective and fixes $E\cap G_e$, and $f(v)\notin E\setminus G_e$ since $f(v)$ is adjacent to $f(e)=e$, so $f(v)\notin E$.  So, let $\pi$ be an automorphism of the random graph which fixes $E\cup \{v\}$ and moves $f(v)$.  As above, $f(\pi(v))\neq \pi(f(v))$, and $v\in G_e$, as required.

\printbibliography

\end{document}